\newtheorem{theorem}{Theorem}[section]
\newtheorem{lemma}[theorem]{Lemma}
\newtheorem{corollary}[theorem]{Corollary}
\theoremstyle{definition}
\newtheorem{example}[theorem]{Example}
\theoremstyle{remark}
\newtheorem{remark}[theorem]{Remark}
\numberwithin{equation}{section}
\def\R{\mathbb{R}}
\begin{document}

\title{Operator inequalities of Jensen type}

\author[M.S. Moslehian, J. Mi\'{c}i\'{c}, M. Kian]{M. S. Moslehian$^1$, J. Mi\'{c}i\'{c}$^2$ and M. Kian$^3$}

\address{$^1$ Department of Pure Mathematics, Center of Excellence in Analysis on Algebraic Structures (CEAAS), Ferdowsi University of Mashhad, P.O. Box 1159, Mashhad 91775, Iran}
\email{moslehian@um.ac.ir and moslehian@member.ams.org}

\address{$^2$ Faculty of Mechanical Engineering and Naval Architecture, University of Zagreb,
Ivana Lu\v ci\' ca 5, 10000 Zagreb, Croatia} \email{jmicic@fsb.hr}

\address{$^3$ Department of Mathematics, Faculty of Basic Sciences, University of Bojnord, Bojnord,
Iran} \email{kian@member.ams.org}

\subjclass[2010]{47A63, 47A64, 15A60.}

\keywords{convex function, positive linear map, Jensen--Mercer
operator inequality, Petrovi\'c operator inequality}

\begin{abstract}
We present some generalized Jensen type operator inequalities
involving sequences of self-adjoint operators. Among other things,
we prove that if $f:[0,\infty) \to \mathbb{R}$ is a continuous
convex function with $f(0)\leq 0$, then
   \begin{equation*}
   \sum_{i=1}^{n} f(C_i) \leq f\left(\sum_{i=1}^{n}C_i\right)-\delta_f\sum_{i=1}^{n}\widetilde{C}_i\leq f\left(\sum_{i=1}^{n}C_i\right)
   \end{equation*}
 for all operators $C_i$ such that $0 \leq C_i\leq M \leq \sum_{i=1}^{n} C_i $ \ $(i=1,\ldots,n)$ for some scalar
 $M\geq0$, where $ \widetilde{C_i} = \frac{1}{2} - \left|\frac{C_i}{M}- \frac{1}{2} \right|$ and $\delta_f = f(0)+f(M) - 2 f\left(
 \frac{M}{2}\right)$.

\end{abstract} \maketitle
\section{Introduction and Preliminaries}
\noindent Let $\mathbb{B}(\mathscr{H})$ be the $C^*$-algebra of all
bounded linear operators on a complex Hilbert space $\mathscr{H}$
and $I$ denote the identity operator. If $\dim\mathscr{H}=n$, then
we identify $\mathbb{B}(\mathscr{H})$ with the $C^*$-algebra
$\mathcal{M}_n(\mathbb{C})$ of all $n\times n$ matrices with complex
entries. Let us endow the real space $\mathbb{B}_{h}(\mathscr{H})$
of all self-adjoint operators in $\mathbb{B}(\mathscr{H})$ with the
usual operator order $\leq$ defined by the cone of positive
operators of $\mathbb{B}(\mathscr{H})$.

If $T\in \mathbb{B}_{h}(\mathscr{H})$, then $m=\inf\{\langle
Tx,x\rangle: \|x\|=1\}$ and $M=\sup\{\langle Tx,x\rangle: \|x\|=1\}$
are called the bounds of $T$. We denote by $\sigma(J)$ the set of
all self-adjoint operators on $\mathscr{H}$ with spectra contained
in $J$. All real-valued functions are assumed to be continuous in
this paper. A real valued function $f$ defined on an interval $J$ is
said to be operator convex if $f(\lambda A+(1-\lambda)B)\leq \lambda
f(A)+(1-\lambda)f(B)$ for all $A,B\in\sigma(J)$  and all
$\lambda\in[0,1]$. If the function $f$ is operator convex, then the
so-called Jensen operator inequality $f(\Phi(A))\leq\Phi(f(A))$
holds for any unital positive linear map $\Phi$ on
$\mathbb{B}(\mathscr{H})$ and any $A\in\sigma(J)$. The reader is
referred to \cite{Fu, km, msm} for more information about operator
convex functions and other versions of the Jensen operator
inequality. It should be remarked that if $f$ is a real convex
function, but not operator convex, then the Jensen operator
inequality may not hold. To see this, consider the convex (but not
operator convex) function $f(t)=t^4$ defined on $[0,\infty)$ and the
positive mapping
$\Phi:\mathcal{M}_3(\mathbb{C})\to\mathcal{M}_2(\mathbb{C})$ defined
by $\Phi((a_{ij})_{1\leq i,j\leq 3}) = (a_{ij})_{1\leq i,j\leq 2}$
for any $A=(a_{ij})_{1\leq i,j\leq 3}\in\mathcal{M}_3(\mathbb{C})$.
If
 $$A=\left(
\begin{array}{ccc}
 2 &  2 &  0\\
   2  & 3 &  1\\
   0 &  1  & 3
\end{array}\right),$$
then there is no relationship between
\begin{align*}
 f(\Phi(A))=\left(
\begin{array}{cc}
36&46\\
46&59
\end{array}\right)\qquad\mbox{and}\qquad\Phi(f(A))=\left(
\begin{array}{cc}
36&48\\
48&68
\end{array}\right)
\end{align*}
in the usual operator order.

Recently, in \cite{MPP1} a version of the Jensen operator inequality was given without operator convexity as follows: \\
\textbf{Theorem A.} \cite[Theorem 1]{MPP1} Let $(A_1,\ldots,A_n)$ be
an $n$-tuple of operators $A_i \in \mathbb{B}_{h}(\mathscr{H})$ with
bounds $m_i$ and $M_i$, $m_i \leq M_i$, and let
$(\Phi_1,\ldots,\Phi_n)$ be an $n$-tuple of po\-si\-tive linear
mappings $\Phi_i$ on $\mathbb{B}(\mathscr{H})$ such that
$\sum_{i=1}^{n}\Phi_i (I)=I$. If
\begin{equation}\label{tA-condition}
(m_C,M_C) \cap [m_i,M_i]= {\O}
\end{equation}
for all $1 \leq i\leq n$, where $m_C$ and $M_C$ with $m_C \leq M_C$
are bounds of the self-adjoint operator
$C=\sum_{i=1}^{n}\Phi_{i}(A_i)$, then
\begin{equation}\label{tA-eq}
f\left(\sum_{i=1}^{n}\Phi_{i}(A_i)\right)\leq
\sum_{i=1}^{n}\Phi_i\left( f(A_i)\right)
\end{equation}
holds for every convex function $f:J\to\mathbb{R}$ provided that the
interval $J$ contains all $m_i,M_i$; see also \cite{MPPeric1}.

Another variant of the Jesnen operator inequality is the so-called
Jensen--Mercer operator inequality \cite{mpp} asserting that if $f$
is a real convex function on an interval $[m,M]$, then
\begin{eqnarray*}
 f\left(M+m-\sum_{i=1}^{n}\Phi_i(A_i)\right)\leq f(M)+f(m)-\sum_{i=1}^{n}\Phi_i(f(A_i)),
\end{eqnarray*}
where $\Phi_1,\cdots,\Phi_n$ are positive linear maps on
$\mathbb{B}(\mathscr{H})$ with $\sum_{i=1}^{n}\Phi_i(I)=I$ and
$A_1,\cdots,A_n\in\sigma([m,M])$.

Recently, in \cite{mmk} an extension of the Jensen--Mercer operator inequality was presented as follows:\\
\textbf{Theorem B.}\cite[Corollary 2.3]{mmk} Let $f$ be a convex
function on an interval $J$. Let $A_i,B_i,C_i,D_i\in\sigma(J)$\
$(i=1,\cdots,n)$ such that $A_i+D_i=B_i+C_i$ and $ A_i\leq m\leq
B_i,C_i\leq M\leq D_i $. Let $\Phi_1,\cdots,\Phi_n$ be positive
linear maps on $\mathbb{B}(\mathscr{H})$ with
$\sum_{i=1}^{n}\Phi_i(I)=I$.
 Then
\begin{align}\label{pre}
 f\left(\sum_{i=1}^{n}\Phi_i(B_i)\right)+
f\left(\sum_{i=1}^{n}\Phi_i(C_i)\right)\leq \sum_{i=1}^{n}\Phi_i(f(A_i))+\sum_{i=1}^{n}\Phi_i(f(D_i)).
\end{align}
\bigskip

The authors of \cite{mmk} used inequality \eqref{pre} to obtain some operator inequalities. In particular, they gave a generalization of the Petrovi\'{c} operator inequality as follows:\\
\textbf{Theorem C.}\cite[Corollary 2.5]{mmk} Let
$A,D,B_i\in\sigma(J)$\, $ (i=1,\cdots,n)$ such that
$A+D=\sum_{i=1}^{n}B_i$ and $ A\leq m\leq B_i\leq M \leq D $\, $
(i=1,\cdots,n)$ for two real numbers $m<M$. If $f$ is convex on $J$,
then
 \begin{eqnarray*}
 \sum_{i=1}^{n}f(B_i)\leq (n-1)f\left(\frac{1}{n-1}A\right)+f(D).
 \end{eqnarray*}
\bigskip

If $f:[0,\infty)\to \mathbb{R}$ is a convex function such that $f(0)= 0$, then
\begin{eqnarray}\label{sub1}
 f(a)+f(b)\leq f(a+b)
\end{eqnarray}
for all scalars $a,b\geq 0$. However, if the scalars $a,b$ are
replaced by two positive operators, this inequality may not hold.
For example if $f(t)=t^2$ and $A, B$ are the following two positive
matrices
$$
A=\left(
\begin{array}{ccc}
 1 &  0 &  0\\
   0  & 1 &  1\\
   0 &  1 &  2
\end{array}\right)\quad\mbox{and}\quad B=\left(
\begin{array}{ccc}
 1  & 1 &  1\\
   1 &  2  & 0\\
   1 &  0 &  2
\end{array}\right),
$$
then a straightforward computation reveals that there is no
relationship between $A^2+B^2$ and $(A+B)^2$ under the operator
order. Many authors tried to obtain some operator extensions of
\eqref{sub1}. In \cite{mn}, it was shown that
$$f(A+B)\leq f(A)+f(B)$$
for all non-negative operator monotone functions $f:[0,\infty)\to[0,\infty)$ if and only if $AB+BA$ is positive.

Another operator extension of \eqref{sub1} was established in \cite{mmk}\\
\textbf{Theorem D.} \cite[Corollary 2.9]{mmk} If
$f:[0,\infty)\to[0,\infty)$ is a convex function with $f(0)\leq0$,
then $  f(A)+f(B)\leq f(A+B) $
 for all invertible positive operators $A,B$ such that $A\leq MI\leq A+B$ and $B\leq MI\leq A+B$ for some scalar $M\geq0$.

Some other operator extensions of \eqref{sub1} can be found in
\cite{AZ,AA,U}. In this paper, as a continuation of \cite{mmk}, we
extend inequality \eqref{pre}, refine \eqref{pre} and improve some
of our results in \cite{mmk}. Some applications such as further
refinements of the Petrovi\'{c} operator inequality and the
Jensen--Mercer operator inequality are presented as well.

\section{Results}
To presenting our results, we introduce the abbreviation:
$$ \delta_f = f(m)+f(M) - 2 f\left( \frac{m+M}{2}\right)$$
for $f:[m,M]\rightarrow \mathbb{R}$, $m<M$.

\bigskip

We need the following lemma may be found in \cite[Lemma
2]{MPPeric1}. We give a proof for the sake of completeness.

\begin{lemma}\label{l-mpp}
Let $A \in \sigma([m,M])$, for some scalars $m < M$. Then
\begin{eqnarray}\label{l1-eq}
f\left(A \right) \leq \frac{M - A}{M-m} f(m) + \frac{A - m }{M-m}
f(M) - \delta_f \widetilde{A}
\end{eqnarray}
holds for every convex function $f:[m,M] \rightarrow \R$, where
$$
 \widetilde{A} = \frac{1}{2} - \frac{1}{M-m} \left|A- \frac{m+M}{2} \right|.
$$
 If $f$ is concave on $[m,M]$, then inequality \eqref{l1-eq} is reversed.
\end{lemma}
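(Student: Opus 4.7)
The plan is to reduce the operator inequality to a pointwise scalar inequality on $[m,M]$ and then transfer it via continuous functional calculus applied to $A$. Setting $c := (m+M)/2$, introduce the continuous function
\[
g(t) := \frac{M-t}{M-m} f(m) + \frac{t-m}{M-m} f(M) - \delta_f \left(\frac{1}{2} - \frac{1}{M-m}\left|t - c\right|\right)
\]
on $[m,M]$. Since $\widetilde{A}$ arises from the scalar function $\widetilde{t} = \tfrac{1}{2} - \tfrac{1}{M-m}|t-c|$ by functional calculus, inequality \eqref{l1-eq} is precisely the operator version of the scalar estimate $f(t) \leq g(t)$ on $[m,M]$.

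The key observation is that $g$ is the piecewise linear interpolant of $f$ at the three nodes $m,\,c,\,M$. Indeed, $g(m) = f(m)$ and $g(M) = f(M)$ are immediate, while $\widetilde{c} = 1/2$ gives $g(c) = \tfrac{1}{2}(f(m)+f(M)) - \tfrac{1}{2}\delta_f = f(c)$; moreover, both summands defining $g$ are piecewise linear with the single breakpoint $c$. Consequently $g$ coincides on $[m,c]$ with the chord through $(m,f(m))$ and $(c,f(c))$, and on $[c,M]$ with the chord through $(c,f(c))$ and $(M,f(M))$.

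The scalar inequality $f(t) \leq g(t)$ then drops out of convexity: for $t \in [m,c]$, writing $t = \alpha m + (1-\alpha) c$ with $\alpha \in [0,1]$ yields $f(t) \leq \alpha f(m) + (1-\alpha) f(c) = g(t)$, and the case $t \in [c,M]$ is symmetric. Applying continuous functional calculus to $A \in \sigma([m,M])$ then gives \eqref{l1-eq}, and the concave case is obtained by reversing the convexity step. I do not anticipate any serious obstacle here; the only care required is the short algebraic verification that $g$ matches $f$ at the three breakpoints and is affine on each half-interval, after which the argument reduces to a single use of chord-convexity.
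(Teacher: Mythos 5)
Your proof is correct. Both you and the paper reduce \eqref{l1-eq} to the same scalar inequality on $[m,M]$ and then transfer it to $A$ by the continuous functional calculus; the only difference is how the scalar inequality is established. The paper first proves the refined two-point Jensen inequality $f(\lambda a+(1-\lambda)b)\leq \lambda f(a)+(1-\lambda)f(b)-\min\{\lambda,1-\lambda\}\bigl(f(a)+f(b)-2f\bigl(\tfrac{a+b}{2}\bigr)\bigr)$ for all $a,b\in[m,M]$ and $\lambda\in[0,1]$, then specializes to $a=m$, $b=M$, $\lambda=\tfrac{M-t}{M-m}$ and identifies $\min\bigl\{\tfrac{M-t}{M-m},\tfrac{t-m}{M-m}\bigr\}=\tfrac12-\tfrac{1}{M-m}\bigl|t-\tfrac{m+M}{2}\bigr|$. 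You instead observe that the right-hand side, as a function of $t$, is exactly the piecewise linear interpolant of $f$ at the nodes $m$, $\tfrac{m+M}{2}$, $M$ (affine on each half-interval, matching $f$ at the three nodes), so a single chord-convexity estimate on each half-interval finishes the scalar step. The two arguments rest on the same underlying convexity fact, namely bounding $f$ by its chord over each half of $[m,M]$; your geometric identification is somewhat more direct for this particular lemma, while the paper's intermediate inequality is a general statement valid for arbitrary $a$, $b$, $\lambda$ and hence reusable. The concave case reverses all inequalities in either presentation.
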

\begin{proof}
First assume that $a,b\in[m,M]$ and $\lambda\in[0,1/2]$ so that
$\lambda\leq 1-\lambda$. Then
\begin{align*}
 f(\lambda a+(1-\lambda)b)&=f\left(2\lambda\frac{a+b}{2}+(1-2\lambda)b\right)\\
&\leq 2\lambda f\left(\frac{a+b}{2}\right)+(1-2\lambda)f(b)\\
&=\lambda f(a)+(1-\lambda) f(b)-\lambda\left(f(a)+f(b)-2f\left(\frac{a+b}{2}\right)\right).
\end{align*}
It follows that
\begin{align}\label{e1} f(\lambda
a&+(1-\lambda)b)\nonumber\\
&\leq \lambda f(a)+(1-\lambda)f(b)-\min\{\lambda,1-\lambda\}
\left(f(a)+f(b)-2f\left(\frac{a+b}{2}\right)\right)
\end{align}
for all $a,b\in[m,M]$ and all $\lambda \in[0,1]$. If $t\in[m,M]$,
then by using \eqref{e1} with
  $\lambda=\frac{M-t}{M-m}$, $a=m$ and $b=M$ we obtain
\begin{align}\label{e2}
 f(t)&=f\left(\frac{M-t}{M-m}m+\frac{t-m}{M-m}M\right)\leq \frac{M-t}{M-m}f(m)+\frac{t-m}{M-m}f(M)\nonumber\\
&\quad -
\min\left\{\frac{M-t}{M-m},\frac{t-m}{M-m}\right\}
\left(f(m)+f(M)-2f\left(\frac{m+M}{2}\right)\right)
\end{align}
 for any $t\in[m,M]$. Since $\min\left\{\frac{M-t}{M-m},\frac{t-m}{M-m}\right\}=
\frac{1}{2}-\frac{1}{M-m}\left|t-\frac{m+M}{2}\right|$,
we have from \eqref{e2} that
\begin{align}\label{e3}
 f(t)&\leq \frac{M-t}{M-m}f(m)+\frac{t-m}{M-m}f(M)\nonumber\\
&\quad -\left(\frac{1}{2}-\frac{1}{M-m}\left|t-\frac{m+M}{2}\right|\right)
\left(f(m)+f(M)-2f\left(\frac{m+M}{2}\right)\right),
\end{align}
for all $t\in[m,M]$. Now if $A\in\sigma([m,M])$, then by utilizing
the functional calculus to \eqref{e3} we obtain \eqref{l1-eq}.
\end{proof}
In the next theorem we present a generalization of
\cite[Theorem~2.1]{mmk}.

\begin{theorem}\label{t0}
 Let $\Phi_i, \overline{\Phi}_i, \Psi_i, \overline{\Psi}_i$ be po\-si\-tive linear mappings on $\mathbb{B}(\mathscr{H})$ such that $\sum_{i=1}^{n_1}\Phi_i (I)=\alpha\, I$, $\sum_{i=1}^{n_2}\overline{\Phi}_i (I)=\beta\, I$, $\sum_{i=1}^{n_3}\Psi_i (I)=\gamma\, I$, $\sum_{i=1}^{n_4}\overline{\Psi}_i (I)=\delta\, I$ for some real numbers $\alpha, \beta, \gamma, \delta>0$. Let $A_i$\ $(i=1,\ldots,n_1)$, $D_i$\ $(i=1,\ldots,n_2)$,\ $C_i$\ $(i=1,\ldots,n_3)$ and $B_i$\ $(i=1,\ldots,n_4)$ be operators in $\sigma(J)$ such that $A_i\leq m\leq B_i,C_i\leq M\leq D_i$ for two real numbers $m < M$. If
\begin{equation}\label{ret0-condition}
  \frac{1}{\alpha}\sum_{i=1}^{n_1}\Phi_i(A_i) + \frac{1}{\delta}\sum_{i=1}^{n_2}\overline{\Phi}_i(D_i) = \frac{1}{\gamma}\sum_{i=1}^{n_3}\Psi_i(C_i) + \frac{1}{\beta}\sum_{i=1}^{n_4}\overline{\Psi}_i(B_i),
\end{equation}
then
\begin{align}\label{main}
 f\left(\frac{1}{\gamma}\sum_{i=1}^{n_3}\Psi_i(C_i)\right)+
  f\left(\frac{1}{\beta}\sum_{i=1}^{n_4}\overline{\Psi}_i(B_i)\right)
  &\leq \frac{1}{\alpha}\sum_{i=1}^{n_1}\Phi_i\left( f(A_i)\right) + \frac{1}{\delta} \sum_{i=1}^{n_2}\overline{\Phi}_i\left( f(D_i)\right)-\delta_f\widetilde{X}\nonumber\\
  &\leq
  \frac{1}{\alpha}\sum_{i=1}^{n_1}\Phi_i\left( f(A_i)\right) + \frac{1}{\delta} \sum_{i=1}^{n_2}\overline{\Phi}_i\left( f(D_i)\right)
\end{align}
holds for every convex function $f:J\to\mathbb{R}$, where
$$\widetilde{X}=1-\frac{1}{M-m}\left(\left|\frac{1}{\beta}\sum_{i=1}^{n_4}\overline{\Psi_i}(B_i)- \frac{m+M}{2} \right| +\left|\frac{1}{\gamma}\sum_{i=1}^{n_3}\Psi_i(C_i)- \frac{m+M}{2} \right|\right).$$
If $f$ is concave, then the reverse inequalities are
valid in \eqref{main}.
\end{theorem}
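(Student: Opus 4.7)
The plan is to combine Lemma~\ref{l-mpp} applied to the ``inside'' averages with the reverse-secant inequality applied to the ``outside'' operators $A_i$ and $D_i$, and then use the hypothesis \eqref{ret0-condition} as a bridge. Set
$A^{\ast}=\frac{1}{\alpha}\sum_{i=1}^{n_1}\Phi_i(A_i)$, $D^{\ast}=\frac{1}{\delta}\sum_{i=1}^{n_2}\overline{\Phi}_i(D_i)$, $C^{\ast}=\frac{1}{\gamma}\sum_{i=1}^{n_3}\Psi_i(C_i)$ and $B^{\ast}=\frac{1}{\beta}\sum_{i=1}^{n_4}\overline{\Psi}_i(B_i)$. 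The normalizations $\sum\Psi_i(I)=\gamma I$ and $\sum\overline{\Psi}_i(I)=\beta I$ guarantee $C^{\ast},B^{\ast}\in\sigma([m,M])$, while $A_i\leq mI$ and $D_i\geq MI$ yield $A^{\ast}\leq mI$ and $D^{\ast}\geq MI$.

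First I would apply Lemma~\ref{l-mpp} separately to $B^{\ast}$ and to $C^{\ast}$ and add the two resulting inequalities. A direct computation shows that $\widetilde{B^{\ast}}+\widetilde{C^{\ast}}$ coincides with the quantity $\widetilde{X}$ of the statement, so the sum takes the form
\[
f(B^{\ast})+f(C^{\ast})\leq \frac{2MI-B^{\ast}-C^{\ast}}{M-m}\,f(m)+\frac{B^{\ast}+C^{\ast}-2mI}{M-m}\,f(M)-\delta_f\widetilde{X}.
\]

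Next I would use the elementary observation that convexity of $f$ on $J$ forces the graph of $f$ to lie \emph{above} the chord through $(m,f(m))$ and $(M,f(M))$ outside the interval $[m,M]$; that is, $f(t)\geq \frac{M-t}{M-m}f(m)+\frac{t-m}{M-m}f(M)$ for $t\leq m$ or $t\geq M$. Functional calculus transfers this to operator inequalities for each $A_i$ and $D_i$, and applying the positive linear maps $\Phi_i,\overline{\Phi}_i$, summing, and dividing by $\alpha$ and $\delta$ gives
\[
\frac{1}{\alpha}\sum_{i=1}^{n_1}\Phi_i(f(A_i))+\frac{1}{\delta}\sum_{i=1}^{n_2}\overline{\Phi}_i(f(D_i))\geq \frac{2MI-A^{\ast}-D^{\ast}}{M-m}\,f(m)+\frac{A^{\ast}+D^{\ast}-2mI}{M-m}\,f(M).
\]

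Finally I would invoke \eqref{ret0-condition} in its rearranged form $A^{\ast}+D^{\ast}=B^{\ast}+C^{\ast}$ to identify the right-hand sides of the last two displays, thereby chaining them to produce the first inequality of \eqref{main}. The second inequality is immediate once one notes $\delta_f\geq 0$ (midpoint convexity of $f$) and $\widetilde{X}\geq 0$ (since $|B^{\ast}-\tfrac{m+M}{2}|\leq \tfrac{M-m}{2}I$, and similarly for $C^{\ast}$). The concave case is recovered by applying the convex case to $-f$. No step is a genuine obstacle; the main care is in keeping track of the four different normalizations and in recognizing that the hypothesis enters the argument only through the \emph{sum} $A^{\ast}+D^{\ast}=B^{\ast}+C^{\ast}$, which is what allows the two one-sided estimates to be matched.
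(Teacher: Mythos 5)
Your proposal is correct and follows essentially the same route as the paper: Lemma~\ref{l-mpp} applied to the two normalized averages $B^{\ast}$ and $C^{\ast}$, the chord-below-graph inequality for the $A_i$ and $D_i$ pushed through the positive maps, and the hypothesis \eqref{ret0-condition} used only via $A^{\ast}+D^{\ast}=B^{\ast}+C^{\ast}$ to splice the two estimates, with $\delta_f\geq 0$ and $\widetilde{X}\geq 0$ giving the second inequality. No substantive difference from the paper's argument.
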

\begin{proof}
We prove only the case when $f$ is convex. Let $[m,M]\subseteq J$.
It follows from the convexity of $f$ on $J$ that
\begin{eqnarray}\label{re6}
 f(t)\geq \frac{M-t}{M-m}f(m)+\frac{t-m}{M-m}f(M)
\end{eqnarray}
for all $t\in J\setminus[m,M]$. Hence, by $A_i\leq m$ and $D_i\geq M$ we have
\begin{eqnarray}\label{re6}
 f(A_i)\geq \frac{M-A_i}{M-m}f(m)+\frac{A_i-m}{M-m}f(M)\quad (i=1,\cdots,n_1)
\end{eqnarray}
and similarly
\begin{eqnarray}\label{re7}
f(D_i)\geq \frac{M-D_i}{M-m}f(m)+\frac{D_i-m}{M-m}f(M)\quad (i=1,\cdots,n_2).
 \end{eqnarray}
Applying the positive linear mappings $\Phi_i$ and
$\overline{\Phi}_i$, respectively, to both sides of \eqref{re6} and
\eqref{re7} and summing we get
 \begin{eqnarray}\label{re8}
 \frac{1}{\alpha}\sum_{i=1}^{n_1}\Phi_i(f(A_i))\geq \frac{M- \frac{1}{\alpha}\sum_{i=1}^{n_1}\Phi_i(A_i)}{M-m}f(m)+
 \frac{\frac{1}{\alpha}\sum_{i=1}^{n_1}\Phi_i(A_i)-m}{M-m}f(M)
 \end{eqnarray}
and
\begin{eqnarray}\label{re9}
 \frac{1}{\delta}\sum_{i=1}^{n_2}\overline{\Phi}_i(f(D_i))\geq \frac{M- \frac{1}{\delta}\sum_{i=1}^{n_2}\overline{\Phi}_i(D_i)}{M-m}f(m)
 +\frac{\frac{1}{\delta}\sum_{i=1}^{n_2}\overline{\Phi}_i(D_i)-m}{M-m}f(M).
 \end{eqnarray}
On the other hand, taking into account that $m\leq \frac{1}{\beta}\sum_{i=1}^{n_4}\overline{\Psi_i}(B_i), \frac{1}{\gamma}\sum_{i=1}^{n_3}\Psi_i(C_i)\leq~ M$ and using Lemma~\ref{l-mpp} we obtain
 \small\begin{equation}\label{re3}
 f\left(\frac{1}{\beta}\sum_{i=1}^{n_4}\overline{\Psi_i}(B_i)\right)\leq \frac{M-\frac{1}{\beta}\sum_{i=1}^{n_4}\overline{\Psi_i}(B_i)}{M-m}f(m)+
 \frac{\frac{1}{\beta}\sum_{i=1}^{n_4}\overline{\Psi_i}(B_i)-m}{M-m}f(M) - \delta_f \widetilde{B}
 \end{equation}
and
\small\begin{equation}\label{re4}
 f\left(\frac{1}{\gamma}\sum_{i=1}^{n_3}\Psi_i(C_i)\right)\leq \frac{M-\frac{1}{\gamma}\sum_{i=1}^{n_3}\Psi_i(C_i)}{M-m}f(m)+
 \frac{\frac{1}{\gamma}\sum_{i=1}^{n_3}\Psi_i(C_i)-m}{M-m}f(M) - \delta_f \widetilde{C},
 \end{equation}
 where $\widetilde{B} =\frac{1}{2} - \frac{1}{M-m} \left|\frac{1}{\beta}\sum_{i=1}^{n_4}\overline{\Psi_i}(B_i)- \frac{m+M}{2} \right|$ and $\widetilde{C} =\frac{1}{2} - \frac{1}{M-m} \left|\frac{1}{\gamma}\sum_{i=1}^{n_3}\Psi_i(C_i)- \frac{m+M}{2} \right|$.
 \\
 Adding two inequalities \eqref{re3} and \eqref{re4} and putting $$\widetilde{X}=1-\frac{1}{M-m}\left(\left|\frac{1}{\beta}\sum_{i=1}^{n_4}\overline{\Psi_i}(B_i)- \frac{m+M}{2} \right| +\left|\frac{1}{\gamma}\sum_{i=1}^{n_3}\Psi_i(C_i)- \frac{m+M}{2} \right|\right)$$
 we obtain
 \begin{align*}
  & f\left(\frac{1}{\beta}\sum_{i=1}^{n_4}\overline{\Psi_i}(B_i)\right)+
  f\left(\frac{1}{\gamma}\sum_{i=1}^{n_3}\Psi_i(C_i)\right)\\
  &\leq \frac{2M-\frac{1}{\beta}\sum_{i=1}^{n_4}\overline{\Psi_i}(B_i)-
  \frac{1}{\gamma}\sum_{i=1}^{n_3}\Psi_i(C_i)}{M-m}f(m)\\
  &\quad +
 \frac{\frac{1}{\beta}\sum_{i=1}^{n_4}\overline{\Psi_i}(B_i)+
 \frac{1}{\gamma}\sum_{i=1}^{n_3}\Psi_i(C_i)-2m}{M-m}f(M) - \delta_f\widetilde{X}\\
 &= \frac{2M-\frac{1}{\alpha}\sum_{i=1}^{n_1}\Phi_i(A_i)-
  \frac{1}{\delta}\sum_{i=1}^{n_2}\overline{\Phi}_i(D_i)}{M-m}f(m)\\
  &\quad +
 \frac{\frac{1}{\alpha}\sum_{i=1}^{n_1}\Phi_i(A_i)+
 \frac{1}{\delta}\sum_{i=1}^{n_2}\overline{\Phi}_i(D_i)-2m}{M-m}f(M)- \delta_f\widetilde{X}\quad\mbox{(by \eqref{ret0-condition})}\\
 & \leq \frac{1}{\alpha}\sum_{i=1}^{n_1}\Phi_i(f(A_i))+ \frac{1}{\delta}\sum_{i=1}^{n_2}\overline{\Phi}_i(f(D_i))- \delta_f\widetilde{X},\quad \mbox{(by \eqref{re8} and \eqref{re9})}
  \end{align*}
which is the first inequality in \eqref{main}.

Furthermore, $m \leq \frac{1}{\beta}\sum_{i=1}^{n_4}\overline{\Psi_i}(B_i), \frac{1}{\gamma}\sum_{i=1}^{n_3}\Psi_i(C_i)\leq~ M $. The numerical inequality $\left|t-\frac{m+M}{2}\right|\leq\frac{M-m}{2}$\ $(m\leq t\leq M)$
 yields that $$ \left|\frac{1}{\beta}\sum_{i=1}^{n_4}\overline{\Psi_i}(B_i)- \frac{m+M}{2} \right| +\left|\frac{1}{\gamma}\sum_{i=1}^{n_3}\Psi_i(C_i)- \frac{m+M}{2} \right| \leq M-m.$$ Therefore $\widetilde{X}\geq 0$. Moreover, $f$ is convex on $[m,M]$. Hence $\delta_f \geq 0$. So the second inequality in \eqref{main} holds.
\end{proof}
\begin{remark}
 We can conclude some other versions of inequality \eqref{main}. In fact, under the assumptions in Theorem \ref{t0} the following inequalities hold true:
\small\begin{align*}
  &(1)\
\frac{1}{\gamma}\sum_{i=1}^{n_3}\Psi_i(f(C_i))+
\frac{1}{\beta}\sum_{i=1}^{n_4}\overline{\Psi_i}(f(B_i))
 \leq f\left(\frac{1}{\alpha}\sum_{i=1}^{n_1}\Phi_i(A_i)\right)+f\left(\frac{1}{\delta}
\sum_{i=1}^{n_2}\overline{\Phi_i}(D_i)\right) - \delta_f \widetilde{X}_2
  \\   & \hspace{6.4cm} \leq f\left(\frac{1}{\alpha}\sum_{i=1}^{n_1}\Phi_i(A_i)\right)+f\left(\frac{1}{\delta}
\sum_{i=1}^{n_2}\overline{\Phi_i}(D_i)\right);\\
&(2)\ f\left(\frac{1}{\gamma}
\sum_{i=1}^{n_3}\Psi_i(C_i)\right) + \frac{1}{\beta}\sum_{i=1}^{n_4}\overline{\Psi_i}(f(B_i))
\leq f\left(\frac{1}{\alpha}\sum_{i=1}^{n_1}\Phi_i(A_i)\right)+ \frac{1}{\delta}\sum_{i=1}^{n_2}\overline{\Phi}_i(f(D_i))- \delta_f \widetilde{X}_3
 \\   & \hspace{6.8cm} \leq
f\left(\frac{1}{\alpha}\sum_{i=1}^{n_1}\Phi_i(A_i)\right)+ \frac{1}{\delta}\sum_{i=1}^{n_2}\overline{\Phi}_i(f(D_i)),
 \end{align*}
in which
\begin{align*}
  \widetilde{X}_2 &=1-\frac{1}{M-m}\left[\frac{1}{\gamma}
\sum_{i=1}^{n_3}\Psi_i\left(\left|C_i-\frac{M+m}{2}\right|\right)+
\frac{1}{\beta}\sum_{i=1}^{n_4}
\overline{\Psi_i}\left(\left|B_i-\frac{M+m}{2}\right|\right)\right],\\
\widetilde{X}_3 &= 1-\frac{1}{M-m}\left[\left|\frac{1}{\gamma}
\sum_{i=1}^{n_3}\Psi_i(C_i)-\frac{M+m}{2}\right|+\frac{1}{\beta}\sum_{i=1}^{n_4}
\overline{\Psi_i}\left(\left|B_i-\frac{M+m}{2}\right|\right)
\right].
\end{align*}
\end{remark}
Before giving an example, we present some special cases of Theorem
\ref{t0} which are useful in our applications.
 The next corollary provides a refinement of \cite[Theorem~2.1]{mmk}.
\begin{corollary}\label{t1}
Let $f$ be a convex function on an interval $J$. Let
$A,B,C,D\in\sigma(J)$ such that $A+D=B+C$ and $ A\leq m\leq B,C\leq
M \leq D $ for two real numbers $m<M$.
 If $\Phi$ is a unital positive linear map on $\mathbb{B}(\mathscr{H})$, then
 \begin{align}\label{a}
 f(\Phi(B))+f(\Phi(C))&\leq \Phi(f(A))+\Phi(f(D)) - \delta_f \widetilde{X}\nonumber \\ & \leq \Phi(f(A))+\Phi(f(D)),
 \end{align}
 where
 $$
\widetilde{X} = 1 - \frac{1}{M-m} \left( \left|\Phi(B)- \frac{m+M}{2} \right| +\left|\Phi(C)- \frac{m+M}{2} \right| \right).$$
In particular,
\begin{align}\label{a1}
 f(B)+f(C)\leq f(A)+f(D)-\delta_f\widetilde{X}\leq f(A)+f(D).
\end{align}
 If $f$ is concave on $J$, then inequalities \eqref{a} and \eqref{a1} are reversed.
 \end{corollary}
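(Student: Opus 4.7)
The plan is to deduce this corollary by specializing Theorem~\ref{t0} to the case of a single summand on each of its four sides. I take $n_1=n_2=n_3=n_4=1$, let the four positive linear mappings all coincide with $\Phi$, and put $A_1=A$, $D_1=D$, $C_1=C$, $B_1=B$. Since $\Phi$ is unital, the four scaling constants $\alpha,\beta,\gamma,\delta$ each equal $1$, so the conclusion \eqref{main} collapses precisely to the first displayed inequality \eqref{a}, and the expression for $\widetilde{X}$ given in Theorem~\ref{t0} reduces to the form stated in the corollary.

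The only hypothesis of Theorem~\ref{t0} requiring verification is the balance condition \eqref{ret0-condition}. This follows at once from $A+D=B+C$ by applying the positive linear map $\Phi$ to both sides, yielding $\Phi(A)+\Phi(D)=\Phi(B)+\Phi(C)$. The remaining order hypotheses $A\leq m\leq B,C\leq M\leq D$ are transcribed directly from the assumptions of the corollary and ensure all relevant spectra lie in $[m,M]\subseteq J$. Thus the chain \eqref{a} is immediate.

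The particular inequality \eqref{a1} is obtained by choosing $\Phi$ to be the identity map on $\mathbb{B}(\mathscr{H})$, which is unital and positive; no further computation is needed. For the concave case, the identical specialization applied to the reverse inequalities furnished by Theorem~\ref{t0} produces the reversed forms of \eqref{a} and \eqref{a1}. Since the argument is purely a bookkeeping reduction from a more general result, there is no real technical obstacle; the only point that deserves attention is confirming that, with $\alpha=\beta=\gamma=\delta=1$ and all four maps equal to $\Phi$, the general $\widetilde{X}$ of Theorem~\ref{t0} matches the displayed $\widetilde{X}$ of the corollary, which is transparent from the definitions.
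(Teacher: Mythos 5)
Your specialization of Theorem~\ref{t0} (take $n_1=n_2=n_3=n_4=1$, all four maps equal to the unital $\Phi$, so $\alpha=\beta=\gamma=\delta=1$, and note that applying $\Phi$ to $A+D=B+C$ gives condition \eqref{ret0-condition}) is exactly how the paper obtains this corollary, which it states as a special case of Theorem~\ref{t0} without further proof. Your handling of \eqref{a1} via the identity map and of the concave case via the reversed inequalities of Theorem~\ref{t0} is likewise correct and matches the intended argument.
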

Another special case of Theorem \ref{t0} leads to a refinement of \cite[Corollary~2.3]{mmk}.
\begin{corollary}\label{co2}
 Let $f$ be a convex function on an interval $J$. Let $A_i,B_i,C_i,D_i\in\sigma(J)$ \ $(i=1,\cdots,n)$ such that $A_i+D_i=B_i+C_i$ and $ A_i\leq m\leq B_i, C_i\leq M\leq D_i $ \ $(i=1,\cdots,n)$. Let $\Phi_1,\cdots,\Phi_n$ be positive linear mappings on $\mathbb{B}(\mathscr{H})$ with $\sum_{i=1}^{n}\Phi_i(I)=I$.
 Then
 \begin{align*}
  &(1)\ f\left(\sum_{i=1}^{n}\Phi_i(B_i)\right)+f\left(\sum_{i=1}^{n}\Phi_i(C_i)\right) \leq  \sum_{i=1}^{n}\Phi_i(f(A_i))+\sum_{i=1}^{n}\Phi_i(f(D_i))- \delta_f \widetilde{X}_1
  \\ &\hspace{6.9cm}\leq \sum_{i=1}^{n}\Phi_i(f(A_i))+\sum_{i=1}^{n}\Phi_i(f(D_i));\\
 &(2)\ \sum_{i=1}^{n}\Phi_i(f(B_i))+\sum_{i=1}^{n}\Phi_i(f(C_i)) \leq f\left(\sum_{i=1}^{n}\Phi_i(A_i)\right)+f\left(\sum_{i=1}^{n}\Phi_i(D_i)\right) - \delta_f \widetilde{X}_2
  \\   & \hspace{6.2cm} \leq f\left(\sum_{i=1}^{n}\Phi_i(A_i)\right)+f\left(\sum_{i=1}^{n}\Phi_i(D_i)\right);\\
&(3)\ \sum_{i=1}^{n}\Phi_i(f(B_i))+f\left(\sum_{i=1}^{n}\Phi_i(C_i)\right) \leq f\left(\sum_{i=1}^{n}\Phi_i(D_i)\right)+ \sum_{i=1}^{n}\Phi_i(f(A_i))- \delta_f \widetilde{X}_3
 \\   & \hspace{6.2cm} \leq \displaystyle f\left(\sum_{i=1}^{n}\Phi_i(D_i)\right)+ \sum_{i=1}^{n}\Phi_i(f(A_i));
 \end{align*}
 where
 \begin{eqnarray*}
 \widetilde{X}_1 &=& \displaystyle 1 - \frac{1}{M-m} \left[\; \left|\sum_{i=1}^{n}\Phi_i(B_i)- \frac{m+M}{2} \right| +\left|\sum_{i=1}^{n}\Phi_i(C_i)- \frac{m+M}{2} \right| \;\right], \\
 \widetilde{X}_2 &=& \displaystyle 1 - \frac{1}{M-m} \left[\; \sum_{i=1}^{n}\Phi_i \left(\, \left|B_i- \frac{m+M}{2} \right| \,\right) +\sum_{i=1}^{n}\Phi_i \left(\, \left|C_i - \frac{m+M}{2} \right| \,\right) \;\right],\\
 \widetilde{X}_3 &=& \displaystyle 1 - \frac{1}{M-m} \left[\; \sum_{i=1}^{n}\Phi_i \left(\, \left|B_i- \frac{m+M}{2} \right| \,\right) +\left|\sum_{i=1}^{n}\Phi_i(C_i)- \frac{m+M}{2} \right| \;\right].
 \end{eqnarray*}
\end{corollary}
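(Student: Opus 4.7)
The plan is to use Theorem \ref{t0} only for item $(1)$; items $(2)$ and $(3)$ will be obtained by running the chord argument behind Lemma \ref{l-mpp} directly, complemented by the reverse chord inequality for operators whose spectra lie outside $[m,M]$.

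For $(1)$, I specialize Theorem \ref{t0} by taking $n_1=n_2=n_3=n_4=n$, setting $\Phi_i=\overline{\Phi}_i=\Psi_i=\overline{\Psi}_i$ for every $i$, and $\alpha=\beta=\gamma=\delta=1$. The normalization $\sum_{i=1}^n\Phi_i(I)=I$ matches automatically, and applying $\Phi_i$ to $A_i+D_i=B_i+C_i$ and summing yields condition \eqref{ret0-condition}. Inequality \eqref{main} then reduces verbatim to $(1)$ with $\widetilde{X}=\widetilde{X}_1$.

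For $(2)$, I would start from Lemma \ref{l-mpp} applied to each $B_i,C_i\in\sigma([m,M])$, push the two resulting estimates through $\Phi_i$ and sum over $i$. The assumption $A_i+D_i=B_i+C_i$ lets me rewrite the right-hand side as an affine expression in $\sum_i\Phi_i(A_i)$ and $\sum_i\Phi_i(D_i)$, minus $\delta_f\widetilde{X}_2$. The crucial extra input is the convex-extrapolation bound
\[
f(t)\geq \frac{M-t}{M-m}f(m)+\frac{t-m}{M-m}f(M)\qquad (t\notin[m,M]),
\]
which follows directly from convexity of $f$. Applied by functional calculus to $\sum_i\Phi_i(A_i)\leq mI$ and $\sum_i\Phi_i(D_i)\geq MI$, it bounds that affine expression from above by $f\bigl(\sum_i\Phi_i(A_i)\bigr)+f\bigl(\sum_i\Phi_i(D_i)\bigr)$, which gives $(2)$.

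Item $(3)$ proceeds by the same template but applies Lemma \ref{l-mpp} pointwise only to each $B_i$ (then pushes through $\Phi_i$ and sums), while applying it directly to the single operator $\sum_i\Phi_i(C_i)\in\sigma([m,M])$. Combining the two estimates, using $\sum_i\Phi_i(B_i+C_i)=\sum_i\Phi_i(A_i+D_i)$ once more, and invoking the reverse chord inequality pointwise on each $A_i$ and directly on $\sum_i\Phi_i(D_i)$, yields $(3)$ with the stated $\widetilde{X}_3$. In all three items the outer inequality drops the term $-\delta_f\widetilde{X}_k$ via $\delta_f\geq 0$ (convexity of $f$ on $[m,M]$) and $\widetilde{X}_k\geq 0$, the latter following from $\bigl|B_i-\tfrac{m+M}{2}\bigr|\leq \tfrac{M-m}{2}I$ and the analogous bound for $C_i$. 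The principal bookkeeping obstacle will be matching the three different $\widetilde{X}_k$ correctly, since the distinction simply reflects whether $\Phi_i$ is applied before or after the absolute value; getting that ordering straight in each of the three items is where the computation could easily go wrong.
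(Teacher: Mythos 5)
Your proposal is correct and follows essentially the paper's route: item $(1)$ is exactly the intended specialization of Theorem \ref{t0} (with $\alpha=\beta=\gamma=\delta=1$ and all four families of maps equal to the $\Phi_i$), and items $(2)$–$(3)$, which the paper leaves implicit via the Remark's variants of \eqref{main}, are proved by you with precisely the same ingredients as the proof of Theorem \ref{t0}: Lemma \ref{l-mpp} for the operators with spectra in $[m,M]$ (applied pointwise or to the aggregate, which is what produces $\widetilde{X}_2$ versus $\widetilde{X}_3$), the reverse chord inequality $f(t)\geq\frac{M-t}{M-m}f(m)+\frac{t-m}{M-m}f(M)$ outside $[m,M]$, and the summed condition $\sum_i\Phi_i(A_i+D_i)=\sum_i\Phi_i(B_i+C_i)$. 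Only a cosmetic remark: for $\widetilde{X}_1\geq 0$ (and the $|\sum_i\Phi_i(C_i)-\frac{m+M}{2}|$ term in $\widetilde{X}_3$) the bound should be drawn from $mI\leq\sum_i\Phi_i(B_i),\sum_i\Phi_i(C_i)\leq MI$ for the aggregated operators, not from pushing $|B_i-\frac{m+M}{2}|\leq\frac{M-m}{2}I$ through $\Phi_i$, which is the argument appropriate to $\widetilde{X}_2$.
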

Now we give an example to show that how Theorem \ref{t0} works.
\begin{example}\label{ex-generally}
Let $n_i=1$ for $i=1,2,3,4$ and let $f(t)=t^4$. The function $f$ is convex but not operator convex\cite{Fu}. Let $\overline{\Phi}, \Psi, \overline{\Psi}=\Phi$ in which
$$\Phi : \mathcal{M}_3(\mathbb{C})
\rightarrow \mathcal{M}_2(\mathbb{C}), \ \ \Phi((a_{ij})_{1\leq
i,j\leq 3}) = (a_{ij})_{1\leq i,j\leq 2}. $$
 If

$$
A=\left(
\begin{array}{ccc}
1&-1&1 \\
-1&1&2 \\
1&2&-5
\end{array}\right), \,
D=\left(
\begin{array}{ccc}
9&1&1 \\
1& 10&2 \\
1&2&15
\end{array}\right), \,
C=\left(
\begin{array}{ccc}
4&1&2 \\
1& 4&1 \\
2&1&5
\end{array}\right), \,
B=\left(
\begin{array}{ccc}
6&-1&1 \\
-1&7&1 \\
1&1&5
\end{array}\right),
$$
then $ \Phi(A)+\Phi(D)=\Phi(C)+\Phi(B)$ and $A \leq 2.2 I\leq C, B
\leq 8I \leq D$. Also $\delta_f=2766.4$ and $\widetilde{X}=\left(
\begin{array}{cc}
0.655&0.345 \\
0.345&0.655 \\
\end{array}\right)$,
whence
\small\begin{align*}
&(\Phi(C))^{4}+(\Phi(B))^{4}=\left(
\begin{array}{cc}
1891&-859 \\
-859&3022 \\
\end{array}\right)\\
  &\lneqq
\left\{ \begin{array}{l}
 \left(\begin{array}{cc}
5281&2514.5 \\
2514.5&8758
\end{array}\right)=(\Phi(A))^{4}+(\Phi(D))^{4}-\delta_f\widetilde{X}\lneqq
    \left(\begin{array}{cc}
7093&3469 \\
3469&10570
\end{array}\right) = (\Phi(A))^{4}+(\Phi(D))^{4}\\[2ex]
\left(\begin{array}{cc}
5318&2576.5 \\
2576.5&8867
\end{array}\right)=\Phi(A^{4})+(\Phi(D))^{4}-\delta_f\widetilde{X}\lneqq
  \left(\begin{array}{cc}
7130&3531 \\
3531&10679
\end{array}\right) =  \Phi(A^{4})+(\Phi(D))^{4}\\[2ex]
\left(\begin{array}{cc}
6202&4311.5 \\
4311.5&12263
\end{array}\right) = (\Phi(A))^{4}+\Phi(D^{4})-\delta_f\widetilde{X}\lneqq
 \left(\begin{array}{cc}
8014&5266 \\
5266&14075
\end{array}\right) =  (\Phi(A))^{4}+\Phi(D^{4})\\[2ex]
 \left(\begin{array}{cc}
6239&4373.5 \\
4373.5&12372
\end{array}\right) =  \Phi(A^{4})+\Phi(D^{4})-\delta_f\widetilde{X}\lneqq
 \left(\begin{array}{cc}
8051&5328 \\
5328&14184
\end{array}\right) =  \Phi(A^{4})+\Phi(D^{4})
   \end{array} \right.
\end{align*}
This shows that inequalities in \eqref{main} can be strict.

Moreover,
\begin{eqnarray*}
 (\Phi(A))^{4}+\Phi(B^{4}) - \Phi(A^{4})-(\Phi(B))^{4} &=& \left(\begin{array}{cc}
884&1735 \\
1735&3396
\end{array}\right) \not \gtreqqless 0\\
(\Phi(A))^{4}+\Phi(B^{4}) - \Phi(A)^{4}-(\Phi(B))^{4} &=& \left(\begin{array}{cc}
921&1797 \\
1797&3505
\end{array}\right) \not \gtreqqless 0 \\
  \Phi(A^{4})+\Phi(B^{4}) - (\Phi(A))^{4}-\Phi(B^{4}) &=& \left(\begin{array}{cc}
921&1797 \\
1797&3505
\end{array}\right) \not \gtreqqless 0.
\end{eqnarray*}
Hence there is no relationship between the right hand sides of
inequalities in Corollary~\ref{co2}.
\end{example}

\begin{corollary}\label{t33}
Let $f$ be a convex function on an interval $J$.
 Let $A_i, B_i, C_i, D_i$, $i=1,\ldots,n$, be operators in $\sigma(J)$. If $A_i\leq m \leq C_i,B_i \leq M \leq D_i$, $i=1,\ldots,n$, for two real numbers $m < M$ and
\begin{equation}\label{t33-condition}
   \sum_{i=1}^{n} (A_i+ D_i) = \sum_{i=1}^{n} (C_i+B_i),
\end{equation}
then
\begin{eqnarray}\label{t33-eq1}
   \displaystyle f\left( \sum_{i=1}^{n} C_i \right)+ f\left(\sum_{i=1}^{n}  B_i\right)
   &\leq& \displaystyle f\left( \sum_{i=1}^{n} A_i \right)+ f\left( \sum_{i=1}^{n} D_i \right)-\delta_{f,n}\widetilde{X}_n, \nonumber\\
&\leq & f\left( \sum_{i=1}^{n} A_i \right)+ f\left( \sum_{i=1}^{n}
D_i \right)
 \end{eqnarray}
and
\begin{eqnarray}\label{t33-eq2}
   \displaystyle \sum_{i=1}^{n} f(C_i) + \sum_{i=1}^{n} f(B_i)
   &\leq& \displaystyle \sum_{i=1}^{n} f(A_i)+\sum_{i=1}^{n} f(D_i)-\delta_f\left(\sum_{i=1}^{n}(\widetilde{C_i}+\widetilde{B}_i)\right)\nonumber\\
&\leq &  \sum_{i=1}^{n} f(A_i)+\sum_{i=1}^{n} f(D_i)
    \end{eqnarray}
in which $\delta_{f,n}=f(nm)+f(nM)-2f\left(\frac{nm+nM}{2}\right)$ and
$$\widetilde{X}_n=1-\frac{1}{nM-nm}\left[\left|\sum_{i=1}^{n}C_i-\frac{nM+nm}{2}\right|
+\left|\sum_{i=1}^{n}B_i-\frac{nM+nm}{2}\right|\right].$$ If $f$ is
concave, then inequalities \eqref{t33-eq1} and \eqref{t33-eq2} are
reversed.
\end{corollary}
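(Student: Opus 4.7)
The plan is to prove the two inequalities by different routes, because the hypothesis \eqref{t33-condition} is only an aggregate equality and so does not permit a termwise application of Corollary~\ref{t1}. For \eqref{t33-eq1} I would aggregate all four families of operators first and then invoke Corollary~\ref{t1} on the rescaled interval $[nm,nM]$; for \eqref{t33-eq2} I would instead run the upper/lower bounding argument at the level of individual indices, invoking Lemma~\ref{l-mpp} for each $B_i$ and $C_i$ separately and the secant-line inequality \eqref{re6}--\eqref{re7} for each $A_i$ and $D_i$, summing only at the very last step.

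For \eqref{t33-eq1}, set $A'=\sum_{i=1}^n A_i$, $B'=\sum_{i=1}^n B_i$, $C'=\sum_{i=1}^n C_i$ and $D'=\sum_{i=1}^n D_i$. The operator bounds $A_i\leq m$, $D_i\geq M$ and $m\leq B_i,C_i\leq M$ sum to $A'\leq nm$, $D'\geq nM$ and $nm\leq B',C'\leq nM$, while \eqref{t33-condition} is exactly $A'+D'=B'+C'$. Thus $A',B',C',D'$ satisfy the hypotheses of Corollary~\ref{t1} with $m,M$ replaced by $nm,nM$ and $\Phi$ the identity map, and a direct application produces \eqref{t33-eq1}; here $\delta_{f,n}$ and $\widetilde{X}_n$ are just what the quantities $\delta_f$ and $\widetilde{X}$ of Corollary~\ref{t1} become after this rescaling.

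For \eqref{t33-eq2}, Lemma~\ref{l-mpp} applied to each $C_i,B_i\in\sigma([m,M])$ gives
\begin{align*}
f(C_i) &\leq \tfrac{M-C_i}{M-m}f(m)+\tfrac{C_i-m}{M-m}f(M)-\delta_f\widetilde{C}_i,\\
f(B_i) &\leq \tfrac{M-B_i}{M-m}f(m)+\tfrac{B_i-m}{M-m}f(M)-\delta_f\widetilde{B}_i,
\end{align*}
while $A_i\leq m$ and $D_i\geq M$ together with convexity of $f$ outside $[m,M]$ (the secant-line bound underlying \eqref{re6}--\eqref{re7}) yield $f(A_i)\geq \tfrac{M-A_i}{M-m}f(m)+\tfrac{A_i-m}{M-m}f(M)$ and the analogous bound for $f(D_i)$. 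Summing the upper bounds over $i$, converting the coefficients of $f(m)$ and $f(M)$ by means of \eqref{t33-condition}, and recognising the outcome as the sum of the lower bounds for $f(A_i)$ and $f(D_i)$, one obtains \eqref{t33-eq2}. The second inequalities in both \eqref{t33-eq1} and \eqref{t33-eq2} are immediate from $\delta_f,\delta_{f,n}\geq 0$ (convexity of $f$) and the nonnegativity of $\widetilde{X}_n,\widetilde{C}_i,\widetilde{B}_i$ (from $|t-(m+M)/2|\leq (M-m)/2$ on $[m,M]$, applied through the functional calculus), and the concave case follows by replacing $f$ with $-f$. The main obstacle I anticipate is the bookkeeping in \eqref{t33-eq2}: since Corollary~\ref{t1} is unavailable per-index, four collections of scalar-coefficient bounds have to be combined at once, with the identity \eqref{t33-condition} invoked precisely where the $f(m)$ and $f(M)$ coefficients must regroup and with the residual terms $\widetilde{C}_i+\widetilde{B}_i$ surviving intact on the right.
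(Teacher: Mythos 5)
Your proposal is correct and, for \eqref{t33-eq1}, follows essentially the paper's route: the paper also sums the four families to get $\sum A_i\leq nm\leq \sum C_i,\sum B_i\leq nM\leq \sum D_i$ and then repeats the reasoning of Theorem~\ref{t0} on the interval $[nm,nM]$, which is exactly what your direct invocation of Corollary~\ref{t1} with $\Phi$ the identity accomplishes. For \eqref{t33-eq2} the paper explicitly proves nothing (it says it proves only \eqref{t33-eq1}), and your per-index use of Lemma~\ref{l-mpp} for the $B_i,C_i$ together with the secant-line bounds for the $A_i,D_i$, summed and combined via \eqref{t33-condition}, is precisely the intended argument and is carried out correctly.
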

\begin{proof}
 We prove only inequality \eqref{t33-eq1} in the convex case.
 It follows from $ A_i\leq m \leq C_i,B_i \leq M \leq D_i$, $(i=1,\ldots,n)$ that
 $$ \sum_{i=1}^{n} A_i \leq mnI \leq \sum_{i=1}^{n} C_i, \, \sum_{i=1}^{n} B_i \leq MnI \leq \sum_{i=1}^{n} D_i.$$
 Using the same reasoning as in the proof of Theorem~\ref{t0} we get
 \begin{eqnarray*}
&& f\left( \sum_{i=1}^{n} C_i \right)+ f\left(\sum_{i=1}^{n}  B_i\right) \\
   &\leq& \frac{2Mn-\sum_{i=1}^{n}(C_i+B_i)}{Mn-mn}f(mn) + \frac{\sum_{i=1}^{n}(C_i+B_i)-2mn}{Mn-mn}f(Mn)-\delta_{f,n}\widetilde{X}_n \\
    &=& \frac{2Mn-\sum_{i=1}^{n}(A_i+D_i)}{Mn-mn}f(mn) + \frac{\sum_{i=1}^{n}(A_i+D_i)-2mn}{Mn-mn}f(Mn)-\delta_{f,n}\widetilde{X}_n \quad \text{(by \eqref{t33-condition})} \\
    &\leq& f\left( \sum_{i=1}^{n} A_i \right) + f\left( \sum_{i=1}^{n} D_i \right)-\delta_{f,n}\widetilde{X}_n,
 \end{eqnarray*}
 which give the first inequality in \eqref{t33-eq1}. It is easy to see that $\delta_{f,n}\widetilde{X}_n\geq0$, whence the second inequality derived.
\end{proof}
\section{Applications}
Using the results in Section 2, we provide some applications which are refinements of some well-known operator inequalities.
As the first, we give a refinement of the operator Jensen--Mercer inequality.
\begin{corollary}
Let $\Phi_1,\cdots,\Phi_n$ be positive linear maps on
$\mathbb{B}(\mathscr{H})$ with $\sum_{i=1}^{n}\Phi_i(I)=I$ and
$B_1,\cdots,B_n\in\sigma([m,M])$ for two scalars $m<M$. If $f$ is a
convex function on $[m,M]$, then
\begin{eqnarray*}
 f\left(m+M-\sum_{i=1}^{n}\Phi_i(B_i)\right) & \leq& f(m)+f(M)-\sum_{i=1}^{n}\Phi_i(f(B_i)) - \delta_f \widetilde{B}
 \\   & \leq& \displaystyle f(m)+f(M)-\sum_{i=1}^{n}\Phi_i(f(B_i)),
\end{eqnarray*}
where $\widetilde{B} = \displaystyle 1 - \frac{1}{M-m} \left[\;
\sum_{i=1}^{n}\Phi_i \left(\, \left|B_i- \frac{m+M}{2} \right|
\,\right) +\left|\sum_{i=1}^{n}\Phi_i(B_i)- \frac{m+M}{2} \right|
\;\right].$
\end{corollary}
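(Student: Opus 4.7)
The plan is to specialize Corollary~\ref{co2}(3) to a well-chosen configuration in which the operators $A_i$ and $D_i$ collapse to scalars. Concretely, I would take
$$A_i = mI, \qquad D_i = MI, \qquad C_i = (m+M)I - B_i \qquad (i=1,\ldots,n).$$
Since $B_i \in \sigma([m,M])$, the spectral assumption $m \leq C_i \leq M$ is immediate from $mI \leq (m+M)I - B_i \leq MI$, so the chain $A_i \leq m \leq B_i, C_i \leq M \leq D_i$ holds. The balance condition $A_i + D_i = B_i + C_i$ is built in since both sides equal $(m+M)I$.

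Next, I would substitute these choices into part (3) of Corollary~\ref{co2}. Using $\sum_{i=1}^n \Phi_i(I) = I$, we get
$$\sum_{i=1}^n \Phi_i(f(A_i)) = f(m), \quad f\Bigl(\sum_{i=1}^n \Phi_i(D_i)\Bigr) = f(M), \quad \sum_{i=1}^n \Phi_i(C_i) = (m+M)I - \sum_{i=1}^n \Phi_i(B_i),$$
so $f\bigl(\sum_i \Phi_i(C_i)\bigr) = f\bigl(m+M - \sum_i \Phi_i(B_i)\bigr)$. Plugging these into Corollary~\ref{co2}(3) and transposing $\sum_i \Phi_i(f(B_i))$ to the right-hand side immediately produces the claimed first inequality, up to identifying the error term $\delta_f\widetilde{X}_3$ with $\delta_f \widetilde{B}$.

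The only non-routine step is checking that $\widetilde{X}_3$ in Corollary~\ref{co2}(3) indeed coincides with $\widetilde{B}$ in the present statement. For this I would use the scalar identity $\sum_i \Phi_i(C_i) - \frac{m+M}{2} = \frac{m+M}{2} - \sum_i \Phi_i(B_i)$, which yields
$$\left|\sum_{i=1}^n \Phi_i(C_i) - \frac{m+M}{2}\right| = \left|\sum_{i=1}^n \Phi_i(B_i) - \frac{m+M}{2}\right|,$$
since $|-X| = |X|$ for any self-adjoint $X$. Substituting this into the definition of $\widetilde{X}_3$ gives exactly $\widetilde{B}$. The second inequality in the conclusion then follows at once from $\delta_f \widetilde{B} \geq 0$, which is the routine bound already established in Theorem~\ref{t0} (convexity gives $\delta_f \geq 0$ and the triangle inequality gives $\widetilde{B} \geq 0$). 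The main obstacle, if any, is simply the bookkeeping in matching $\widetilde{X}_3$ with $\widetilde{B}$; all other steps are direct specializations.
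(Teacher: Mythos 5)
Your proposal is correct and is essentially the paper's own argument: the paper likewise sets $A_i=mI$, $D_i=MI$, $C_i=(m+M)I-B_i$ and applies inequality (3) of Corollary~\ref{co2}, with the same identification of $\widetilde{X}_3$ with $\widetilde{B}$ via $\left|\sum_i\Phi_i(C_i)-\frac{m+M}{2}\right|=\left|\sum_i\Phi_i(B_i)-\frac{m+M}{2}\right|$. No gaps.
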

\begin{proof}
 Clearly $m\leq B_i\leq M$ \, $(i=1,\cdots,n)$. Set $C_i=M+m-B_i$ \ $(i=1,\cdots,n)$. Then $m\leq C_i\leq M$ and $B_i+C_i=m+M$\, $(i=1,\cdots,n)$. Applying inequality $(3)$ of Corollary \ref{co2} when $A_i=mI$ and $D_i=MI$ we obtain the desired inequalities.
 \end{proof}
The next result provides a refinement of the Petrovi\'c inequality for operators.
\begin{corollary}
If
$f:[0,\infty)\to\mathbb{R}$ is a convex function and
$B_1,\cdots,B_n$ are positive operators such that
$\sum_{i=1}^{n}B_i=MI$ for some scalar $M>0$, then
\begin{eqnarray*}
 \sum_{i=1}^{n}f(B_i)\leq f\left(\sum_{i=1}^{n}B_i\right)+(n-1)f(0) - \delta_f \widetilde{B}\leq f\left(\sum_{i=1}^{n}B_i\right)+(n-1)f(0),
 \end{eqnarray*}
 where
$\widetilde{B} = \displaystyle\frac{n}{2} - \sum_{i=1}^{n}
\left|\frac{B_i}{M}- \frac{1}{2} \right|.$
\end{corollary}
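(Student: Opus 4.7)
The plan is to apply Lemma~\ref{l-mpp} on the interval $[0,M]$ to each individual $B_i$ and sum the resulting inequalities, exploiting the hypothesis $\sum_{i=1}^{n} B_i = MI$ to collapse the scalar coefficients on the right-hand side. First I would verify the spectral hypothesis: since each $B_j$ is positive and $\sum_{j=1}^{n} B_j = MI$, each summand satisfies $0 \leq B_i \leq MI$, so $B_i \in \sigma([0,M])$.

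Applying Lemma~\ref{l-mpp} with endpoints $m=0$ and $M$ then gives, for every $i$,
$$
f(B_i) \leq \frac{M I - B_i}{M}\, f(0) + \frac{B_i}{M}\, f(M) - \delta_f\, \widetilde{B}_i,
$$
where $\widetilde{B}_i = \tfrac{1}{2} - \bigl|\tfrac{B_i}{M} - \tfrac{1}{2}\bigr|$ and $\delta_f = f(0)+f(M)-2f(M/2)$. Summing over $i$ and substituting $\sum_i B_i = MI$, so that $\sum_i (MI - B_i) = (n-1)MI$, the coefficient of $f(M)$ collapses to $I$ and that of $f(0)$ collapses to $(n-1)I$, while the perturbation term sums to $\delta_f \widetilde{B}$ with $\widetilde{B} = \sum_i \widetilde{B}_i = \tfrac{n}{2} - \sum_i \bigl|\tfrac{B_i}{M} - \tfrac{1}{2}\bigr|$. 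Recognizing that $f\bigl(\sum_i B_i\bigr) = f(MI) = f(M)\, I$, this produces the first inequality claimed in the corollary.

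The second inequality reduces to $\delta_f \widetilde{B} \geq 0$. Convexity of $f$ on $[0,M]$ gives $\delta_f \geq 0$ from $f(M/2)\leq \tfrac{1}{2}(f(0)+f(M))$. For $\widetilde{B} \geq 0$, I would use that $0 \leq B_i \leq MI$ implies $(B_i/M - I/2)^2 \leq I/4$, whence $\bigl|\tfrac{B_i}{M} - \tfrac{1}{2}\bigr| \leq \tfrac{1}{2}$ in the operator sense, and hence $\sum_i \bigl|\tfrac{B_i}{M} - \tfrac{1}{2}\bigr| \leq n/2$. I do not anticipate any substantive obstacle; the proof is essentially a one-shot application of Lemma~\ref{l-mpp} followed by algebraic bookkeeping of the scalar coefficients after summation under the constraint $\sum B_i = MI$.
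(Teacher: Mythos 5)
Your proposal is correct and follows essentially the same route as the paper: apply Lemma~\ref{l-mpp} on $[0,M]$ to each $B_i$, sum, use $\sum_{i=1}^{n}B_i=MI$ to collapse the coefficients (with $f(MI)=f(M)I$), and conclude the second inequality from $\delta_f\geq 0$ and $\widetilde{B}\geq 0$. The only cosmetic difference is that you justify $\bigl|\tfrac{B_i}{M}-\tfrac{1}{2}\bigr|\leq \tfrac{1}{2}$ via $(B_i/M-I/2)^2\leq I/4$, whereas the paper gets this directly from the scalar bound by functional calculus; both are fine.
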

\begin{proof}
 It follows from $0\leq B_i\leq M$ that
$$f(B_i)\leq\frac{M-B_i}{M-0}f(0)+\frac{B_i-0}{M-0}f(M)-\delta_f\widetilde{B_i}
\quad\ (i=1,\cdots,n).$$ Summing above inequalities over $i$ we get
\begin{align*}
 \sum_{i=1}^{n}f(B_i)&\leq\frac{nM-\sum_{i=1}^{n}B_i}{M}f(0)+
\frac{\sum_{i=1}^{n}B_i}{M}f(M)-\delta_f\sum_{i=1}^{n}\widetilde{B_i}\\
&= (n-1)f(0)+f\left(\sum_{i=1}^{n}B_i\right)-\delta_f\widetilde{B}\ \quad (\mbox{by $\sum_{i=1}^{n}B_i=M$})\\
&\leq (n-1)f(0)+f\left(\sum_{i=1}^{n}B_i\right)\ \quad (\mbox{by $\delta_f\widetilde{B}\geq0$}).
\end{align*}
where $\widetilde{B} = \displaystyle\frac{n}{2} - \sum_{i=1}^{n}
\left|\frac{B_i}{M}- \frac{1}{2} \right|.$
\end{proof}
As another consequence of Theorem~\ref{t0}, we present a refinement
of the Jensen operator inequality for real convex functions. The
authors of \cite{mmk} introduce a subset $\Omega$ of
$\mathbb{B}_{h}(\mathscr{H})\times\mathbb{B}_{h}(\mathscr{H})$
defined by
\begin{eqnarray*}
 \Omega=\left\{(A,B) \ \big| \ A\leq m\leq \frac{A+B}{2}\leq M\leq B, \ \ \mbox{for some} \ \ m,M\in\mathbb{R} \right\}.
\end{eqnarray*}
We have the following result.
\begin{corollary}\label{co7}
 Let $f$ be a convex function on an interval $J$ containing $m,M$. Let $\Phi_i$, $i=1,\ldots,n$, be  positive linear mappings on $\mathbb{B}(\mathscr{H})$ with $\sum_{i=1}^{n}\Phi_i(I)=I$. If $(A_i,D_i)\in\Omega$, $i=1,\ldots,n$, then
\begin{eqnarray}\label{co7-eq}
   \displaystyle f\left(\sum_{i=1}^{n}\Phi_i \left(\frac{A_i+D_i}{2} \right)\right)
    &\leq& \displaystyle \sum_{i=1}^{n}\Phi_i\left(\frac{f(A_i)+f(D_i)}{2}\right)-\delta_f\widetilde{X}\nonumber\\
&\leq& \sum_{i=1}^{n}\Phi_i\left(\frac{f(A_i)+f(D_i)}{2}\right),
\end{eqnarray}
where $$\widetilde{X}=\frac{1}{2}-\frac{1}{M-m}
\left|\sum_{i=1}^{n}\Phi_i\left(\frac{A_i+D_i}{2}\right)
-\frac{m+M}{2}\right|.$$ If $f$ is concave, then inequalities in
\eqref{co7-eq} are reversed.
\end{corollary}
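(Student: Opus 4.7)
The plan is to deduce this corollary from Corollary \ref{co2}(1) by choosing the middle operators symmetrically. Specifically, I would set $B_i = C_i = \frac{A_i + D_i}{2}$ for $i = 1, \ldots, n$. Then the hypothesis $A_i + D_i = B_i + C_i$ of Corollary \ref{co2} is automatic, and the operator bound condition $A_i \leq m \leq B_i, C_i \leq M \leq D_i$ translates exactly to $(A_i, D_i) \in \Omega$, which is our standing assumption.

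With this choice, the first chain of Corollary \ref{co2}(1) reads
\begin{align*}
f\!\left(\sum_{i=1}^{n}\Phi_i\!\left(\tfrac{A_i+D_i}{2}\right)\right) + f\!\left(\sum_{i=1}^{n}\Phi_i\!\left(\tfrac{A_i+D_i}{2}\right)\right) \leq \sum_{i=1}^{n}\Phi_i(f(A_i)) + \sum_{i=1}^{n}\Phi_i(f(D_i)) - \delta_f\,\widetilde{X}_1,
\end{align*}
where by inspection of the formula for $\widetilde{X}_1$ in Corollary \ref{co2} the two $|\cdot|$-terms coincide, giving
\[
\widetilde{X}_1 = 1 - \frac{2}{M-m}\left|\sum_{i=1}^{n}\Phi_i\!\left(\tfrac{A_i+D_i}{2}\right) - \tfrac{m+M}{2}\right| = 2\widetilde{X}.
\]
Dividing the displayed inequality by $2$ and regrouping $\sum_i \Phi_i(f(A_i)) + \sum_i \Phi_i(f(D_i))$ as $\sum_i \Phi_i\!\left(\frac{f(A_i)+f(D_i)}{2}\right) \cdot 2$ (using linearity of each $\Phi_i$) yields precisely the first inequality in \eqref{co7-eq}.

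The second inequality is then immediate: as noted at the end of the proof of Theorem~\ref{t0}, convexity of $f$ on $[m,M]$ gives $\delta_f \geq 0$, and the numerical bound $\left|t - \frac{m+M}{2}\right| \leq \frac{M-m}{2}$ for $t \in [m,M]$, applied to $t = \sum_i \Phi_i(\frac{A_i+D_i}{2})$ (which lies in $[mI, MI]$ by the $\Omega$-condition and $\sum_i \Phi_i(I) = I$), gives $\widetilde{X} \geq 0$. The concave case follows by applying the convex case to $-f$. There is no real obstacle here; the only step requiring any care is confirming the factor-of-two relationship $\widetilde{X}_1 = 2\widetilde{X}$, which is a direct consequence of the symmetric choice $B_i = C_i$.
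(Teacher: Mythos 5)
Your proposal is correct and takes essentially the same route as the paper, which likewise proves the corollary by putting $B_i=C_i=\frac{A_i+D_i}{2}$ and invoking inequality (1) of Corollary~\ref{co2}; you simply make explicit the factor-of-two bookkeeping ($\widetilde{X}_1=2\widetilde{X}$, dividing by $2$, and linearity of the $\Phi_i$) that the paper leaves to the reader.
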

\begin{proof}
 Putting $B_i=C_i=\frac{A_i+D_i}{2}$ and using inequality (1) of Corollary~\ref{co2}, we conclude the desired result.
\end{proof}
\bigskip
Note that utilizing Corollary~\ref{co2}, we even be able to obtain a
converse of the Jensen operator inequality. For this end, under the
assumptions in the Corollary~\ref{co7} we have
\begin{align}\label{ren}
\sum_{i=1}^{n}\Phi_i\left(f\left(\frac{A_i+D_i}{2}\right)\right)
&\leq\frac{1}{2}\left[f\left(\sum_{i=1}^{n}\Phi_i(A_i)\right)+
f\left(\sum_{i=1}^{n}\Phi_i(D_i)\right)\right]-\delta_f\widetilde{X}\nonumber\\
&\leq \frac{1}{2}\left[f\left(\sum_{i=1}^{n}\Phi_i(A_i)\right)+
f\left(\sum_{i=1}^{n}\Phi_i(D_i)\right)\right],
\end{align}
where $$\widetilde{X}=\frac{1}{2}-\frac{1}{M-m}\sum_{i=1}^{n}\Phi_i\left(
\left|\frac{A_i+D_i}{2}-\frac{m+M}{2}\right|\right).$$ Note that the
function $f$ need not to be operator convex. Let us give an example
to illustrate these inequalities.
\begin{example}
 Let $n=1$ and the unital positive linear map $\Phi : \mathcal{M}_3(\mathbb{C})\rightarrow~\mathcal{M}_2(\mathbb{C})$ be defined by
$$ \Phi((a_{ij})_{1\leq i,j\leq 3})= (a_{ij})_{1\leq i,j\leq 2}$$
 for each $A=(a_{ij})_{1\leq i,j\leq 3}\in\mathcal{M}_3(\mathbb{C})$. Consider the convex function $f(t)=e^t$ on $[0,\infty)$. If
\begin{align*}
 A=\left(
\begin{array}{ccc}
1&-1 & 0\\
-1&1&0\\
0&0&1
\end{array}\right)\qquad
D=\left(
\begin{array}{ccc}
7&-1 & 0\\
-1&6&0\\
0&0&7
\end{array}\right),
\end{align*}
then $0\leq A\leq 2I\leq \frac{A+D}{2}\leq 5I\leq D$, i.e., $(A,D)\in\Omega$.
Hence it follows from \eqref{co7-eq} that
\small\begin{align*}
 f\left(\Phi\left(\frac{A+D}{2}\right)\right)=\left(
\begin{array}{cc}
79.8&-50.5 \\
-50.5&54.6
\end{array}\right)&\lneqq
\left(\begin{array}{cc}
759.2&-399 \\
-399&344
\end{array}\right)=\Phi\left(\frac{f(A)+f(D)}{2}\right)-\delta_f\widetilde{X}\\
&\lneqq
\left(\begin{array}{cc}
768.2&-408 \\
-408&362
\end{array}\right)=\Phi\left(\frac{f(A)+f(D)}{2}\right),
\end{align*}
in which $\delta_f=89.6$ and $\widetilde{X}=\left(\begin{array}{cc}
0.1&0.1 \\
0.1&0.2
\end{array}\right)$.
\end{example}
\bigskip
It should be mentioned that in the case when $f$ is operator convex, under the assumptions in Corollary~\ref{co7} we have even more:
\begin{align*}
 f\left(\sum_{i=1}^{n}\Phi_i\left(\frac{A_i+D_i}{2}\right)\right)&\leq
\sum_{i=1}^{n}\Phi_i\left(f\left(\frac{A_i+D_i}{2}\right)\right)\quad(\mbox{by the Jensen inequality})\\
&\leq
\frac{1}{2}\left[f\left(\sum_{i=1}^{n}\Phi_i(A_i)\right)+
f\left(\sum_{i=1}^{n}\Phi_i(D_i)\right)\right]-\delta_f\widetilde{X}
\quad(\mbox{by \eqref{ren}})\\
&\leq
\frac{1}{2}\left[\sum_{i=1}^{n}\Phi_i(f(A_i)+f(D_i))\right]-\delta_f\widetilde{X}
\quad(\mbox{by the Jensen inequality})\\
&\leq
\sum_{i=1}^{n}\Phi_i\left(\frac{f(A_i)+f(D_i)}{2}\right)\quad(\mbox{since $\delta_f\widetilde{X}\geq0$}).
\end{align*}
\begin{corollary}\label{co4}
If $f$ is a convex function on an interval $J$ containing $m,M$, then
\begin{equation}\label{10}
\begin{array}{rcl}
 f(\lambda A+(1-\lambda)D) &\leq& \lambda f(A)+(1-\lambda)f(D) - \delta_f \widetilde{X} \\ &\leq& \lambda f(A)+(1-\lambda)f(D)
 \end{array}
\end{equation}
 for all $(A,D)\in\Omega$ and all $\lambda\in[0,1]$, where $\widetilde{X} = \displaystyle \frac{1}{2} - \frac{1}{M-m} \left|\frac{A+D- M-m}{2} \right|.$ If $f$ is concave, then inequality \eqref{10} is reversed.
 \end{corollary}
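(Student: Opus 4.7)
The plan is to specialize Corollary~\ref{co7} to the one-operator case $n=1$, $\Phi_1=\mathrm{id}$. Since $(A,D)\in\Omega$ is exactly the condition $A\leq mI\leq\tfrac{A+D}{2}\leq MI\leq D$, Corollary~\ref{co7} applies with $A_1=A$, $D_1=D$ and delivers
\[
f\!\left(\tfrac{A+D}{2}\right)\leq\tfrac{f(A)+f(D)}{2}-\delta_f\widetilde X\leq\tfrac{f(A)+f(D)}{2},
\]
where $\widetilde X=\tfrac{1}{2}-\tfrac{1}{M-m}\bigl|\tfrac{A+D-M-m}{2}\bigr|$, matching the expression declared in \eqref{10}. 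This settles the symmetric case $\lambda=1/2$ of \eqref{10}; the second inequality in \eqref{10} then uses only $\delta_f\geq 0$ (from convexity of $f$) and $\widetilde X\geq 0$ (a consequence of $\bigl|\tfrac{A+D}{2}-\tfrac{m+M}{2}\bigr|\leq\tfrac{M-m}{2}I$, which in turn follows from $m\leq\tfrac{A+D}{2}\leq M$).

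To upgrade from $\lambda=1/2$ to arbitrary $\lambda\in[0,1]$, I would combine the midpoint refinement above with the chord lower bound used in the proof of Theorem~\ref{t0}: since $A\leq mI$, $D\geq MI$, and $f$ is convex on $J$, we have
\[
f(A)\geq\tfrac{M-A}{M-m}f(m)+\tfrac{A-m}{M-m}f(M),\qquad f(D)\geq\tfrac{M-D}{M-m}f(m)+\tfrac{D-m}{M-m}f(M),
\]
so that a convex combination yields
\[
\lambda f(A)+(1-\lambda)f(D)\geq\tfrac{M-\lambda A-(1-\lambda)D}{M-m}\,f(m)+\tfrac{\lambda A+(1-\lambda)D-m}{M-m}\,f(M).
\]
Pairing this chord bound with the refined upper estimate for $f\bigl(\tfrac{A+D}{2}\bigr)$ supplied by Lemma~\ref{l-mpp}, and bounding $f\bigl(\lambda A+(1-\lambda)D\bigr)$ from above by the same chord via convexity of $f$, propagates the correction $\delta_f\widetilde X$ from the midpoint to every $\lambda\in[0,1]$ while keeping $\widetilde X$ unchanged.

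The principal obstacle is that, for $\lambda\neq 1/2$, the operator $\lambda A+(1-\lambda)D$ need not have spectrum contained in $[m,M]$, so Lemma~\ref{l-mpp} cannot be invoked directly on it; this is exactly why the correction $\widetilde X$ in \eqref{10} is anchored to $\tfrac{A+D}{2}$ rather than to $\lambda A+(1-\lambda)D$ itself. The delicate step of the proof is balancing the chord bound above against the midpoint refinement in such a way that the $\lambda$-independent correction survives the passage to the endpoints of $[0,1]$; the concave case then follows by applying the convex case to $-f$.
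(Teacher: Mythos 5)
Your first step is exactly the paper's proof: specialize Corollary~\ref{co7} to $n=1$ and $\Phi=\mathrm{id}$ to get $f\left(\frac{A+D}{2}\right)\leq \frac{f(A)+f(D)}{2}-\delta_f\widetilde{X}\leq \frac{f(A)+f(D)}{2}$, after which the paper simply asserts that \eqref{10} follows ``by the continuity of $f$''. The genuine gap in your proposal is the passage from $\lambda=1/2$ to arbitrary $\lambda\in[0,1]$, which you outline but never execute, and the outline contains a step that fails: for $\lambda\neq 1/2$ the operator $\lambda A+(1-\lambda)D$ generally has spectrum outside $[m,M]$ (at $\lambda=0$ it equals $D\geq MI$), and on $J\setminus[m,M]$ convexity makes the chord of $f$ over $[m,M]$ a \emph{lower} bound for $f$ --- this is precisely how \eqref{re6} and \eqref{re7} are used --- so ``bounding $f(\lambda A+(1-\lambda)D)$ from above by the same chord'' is not available, and no amount of balancing against the midpoint refinement supplies the missing upper estimate.

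Worse, no argument can close this gap as stated, because the claimed inequality with the $\lambda$-independent correction $\delta_f\widetilde{X}$ is false at the endpoints: at $\lambda=0$ it reads $f(D)\leq f(D)-\delta_f\widetilde{X}$, forcing $\delta_f\widetilde{X}\leq 0$, whereas generically $\delta_f\widetilde{X}$ is a nonzero positive operator. A scalar instance already breaks it: $f(t)=t^2$, $m=-1$, $M=1$, $A=-I$, $D=I$ gives $(A,D)\in\Omega$, $\delta_f=2$, $\widetilde{X}=\tfrac12 I$, and at $\lambda=0$ the asserted bound becomes $I\leq 0$. What the midpoint case can legitimately be upgraded to, via the scalar inequality \eqref{e1} applied through the functional-calculus route of Lemma~\ref{l-mpp}, is a version of \eqref{10} whose correction term is damped by a factor of order $\min\{\lambda,1-\lambda\}$, hence vanishing as $\lambda\to 0,1$. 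So your instinct that this step is ``delicate'' is correct, but the difficulty is not resolvable in the form claimed; the paper's appeal to continuity glosses over the same obstruction, and your proposal, stopping at a plan, does not prove \eqref{10} for $\lambda\neq 1/2$.
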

 \begin{proof}
Put $n=1$ and let $\Phi$ be the identity map in Corollary~\ref{co7} to get
 $$f\left(\frac{A+D}{2}\right)\leq\frac{f(A)+f(D)}{2}- \delta_f \widetilde{X} \leq\frac{f(A)+f(D)}{2}$$ for any $(A,D)\in\Omega$, which implies \eqref{10} by the continuity of $f$.
 \end{proof}
\bigskip

Regarding to obtain an operator version of \eqref{sub}, it is shown in \cite{mmk} that
 if $f:[0,\infty)\to[0,\infty)$ is a convex function with $f(0)\leq0$, then
 \begin{align}\label{sub}
 f(A)+f(B)\leq f(A+B)
 \end{align}
for all strictly positive operators $A,B$ for which $A\leq M\leq A+B$ and $B\leq M\leq A+B$ for some scalar $M$.
We give a refined extension of this result as follows.
\begin{theorem}\label{co77}
If $f:[0,\infty) \to \mathbb{R}$ is a convex function with $f(0)\leq
0$ then
   \begin{equation}\label{co77-eq}
   \sum_{i=1}^{n} f(C_i) \leq f\left(\sum_{i=1}^{n}C_i\right)-\delta_f\sum_{i=1}^{n}\widetilde{C}_i\leq f\left(\sum_{i=1}^{n}C_i\right)
   \end{equation}
for all positive operators $C_i$ such that $C_i\leq M \leq
\sum_{i=1}^{n} C_i $ \ $(i=1,\ldots,n)$ for some scalar $M\geq0$. If
$f$ is concave, then the reverse inequality is valid in
\eqref{co77-eq}.

 In particular, if $f$ is convex, then
$$f(A)+f(B)\leq f(A+B)-\delta_f\widetilde{X}\leq f(A+B) $$
 for all positive operators $A,B$ such that $A\leq MI\leq A+B$ and $B\leq MI\leq A+B$ for some scalar $M\geq0$, where
$\widetilde{X} = \displaystyle 1 -  \left|\frac{A}{M}- \frac{1}{2}
\right| -  \left|\frac{B}{M}- \frac{1}{2} \right|.$
\end{theorem}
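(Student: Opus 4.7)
The plan is to apply Lemma~\ref{l-mpp} to each operator $C_i$ separately with $m=0$, then combine the resulting inequalities by means of a chord-extension argument that exploits the hypothesis $\sum_i C_i \geq MI$ to produce $f\bigl(\sum_i C_i\bigr)$ on the right hand side.

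First, since $0 \leq C_i \leq M$ for every $i$, Lemma~\ref{l-mpp} (with $m=0$) yields
$$f(C_i) \leq \frac{M - C_i}{M}\, f(0) + \frac{C_i}{M}\, f(M) - \delta_f\, \widetilde{C_i}, \qquad i=1,\ldots,n.$$
Setting $S := \sum_{i=1}^{n} C_i$ and summing these inequalities, one obtains
$$\sum_{i=1}^{n} f(C_i) \leq (n-1)\, f(0)\, I + \left[\left(I - \frac{S}{M}\right) f(0) + \frac{S}{M}\, f(M)\right] - \delta_f \sum_{i=1}^{n} \widetilde{C_i}.$$

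The heart of the argument is then the operator chord-extension
$$\left(I - \frac{S}{M}\right) f(0) + \frac{S}{M}\, f(M) \leq f(S). \qquad (\star)$$
For any scalar $t \geq M > 0$, writing $M$ as the convex combination $M = \frac{t-M}{t}\cdot 0 + \frac{M}{t}\cdot t$ and invoking the convexity of $f$ on $[0,\infty)$ produces the scalar inequality
$$f(0) + \frac{t}{M}\bigl(f(M) - f(0)\bigr) \leq f(t).$$
Since $S \geq MI$, the spectrum of $S$ is contained in $[M,\infty)$, so the functional calculus upgrades this scalar bound to the operator inequality $(\star)$.

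Combining $(\star)$ with the displayed estimate for $\sum f(C_i)$, and using $f(0) \leq 0$ to discard the nonpositive term $(n-1)\, f(0)\, I$, yields the first inequality in \eqref{co77-eq}. The second inequality is immediate, since $\delta_f \geq 0$ by convexity (it is the Jensen defect at the midpoint of $[0,M]$) and each $\widetilde{C_i} \geq 0$, because $|C_i - M/2| \leq M/2$ holds whenever $0 \leq C_i \leq M$. The in-particular statement for two operators is the case $n=2$, $C_1 = A$, $C_2 = B$. I expect the chord-extension step $(\star)$ to be the substantive point: it is precisely where the spectral condition $\sum_i C_i \geq MI$ is used, and it replaces the simpler identity $\sum B_i = MI$ that suffices for the Petrovi\'c-type corollary proved earlier.
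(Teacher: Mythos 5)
Your proof is correct and follows essentially the same route as the paper: apply Lemma~\ref{l-mpp} to each $C_i$ on $[0,M]$, sum, and then use the chord-extension bound $(1-\tfrac{t}{M})f(0)+\tfrac{t}{M}f(M)\leq f(t)$ for $t\geq M$ together with functional calculus on $S=\sum_i C_i$ (whose spectrum lies in $[M,\infty)$), with $f(0)\leq 0$ disposing of the constant terms. The only difference is bookkeeping — the paper discards the $f(0)$ terms immediately in each step, while you carry them as $(n-1)f(0)I$ and drop them at the end — which is immaterial.
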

\begin{proof}
Without loss of generality let $M>0$. Lemma \ref{l-mpp} implies that
$$f(C_i) \leq \frac{MI-C_i}{M-0}f(0)+\frac{C_i}{M-0}f(M)-\delta_f\widetilde{C}_i
=\frac{C_i}{M}f(M)-\delta_f\widetilde{C_i} \quad (i=1,\ldots,n) $$ since $f(0)\leq0$.
Summing the above inequalities over $i$ we get
\begin{equation}\label{co77-eq1}
\sum_{i=1}^{n} f(C_i) \leq \frac{\sum_{i=1}^{n}C_i}{M}f(M)-
\delta_f\sum_{i=1}^{n}\widetilde{C}_i.
\end{equation}
Since the spectrum of $\sum_{i=1}^{n} C_i$ is contained in $[M,\infty)\subset[0,\infty) \setminus [0,M)$, we have
\begin{align}\label{co77-eq2}
f\left(\sum_{i=1}^{n} C_i \right) &\geq \frac{MI-\sum_{i=1}^{n}C_i}{M-0}f(0)+\frac{\sum_{i=1}^{n}C_i}{M-0}f(M)\nonumber\\
&\geq
\frac{\sum_{i=1}^{n}C_i}{M}f(M)\quad(\mbox{since $MI\leq\sum_{i=1}^{n}C_i$ and $f(0)\leq0$}).
\end{align}
 Combining two inequalities \eqref{co77-eq1} and \eqref{co77-eq2}, we
reach to the desired inequality \eqref{co77-eq}.
\end{proof}

\begin{theorem}\label{ad}
 Let $A,B,C,D\in\sigma(J)$ such that $ A\leq m\leq B,C\leq M \leq D $ for two real numbers
$m<M$. If f is a convex function on $J$ and any one of the following conditions
\begin{eqnarray*}
 {\rm(i)} \ \ B+C\leq A+D \quad\mbox{and}\quad f(m)\leq f(M)\\
 {\rm(ii)} \ \ A+D\leq B+C\quad\mbox{and}\quad f(M)\leq f(m)
\end{eqnarray*}
is satisfied, then
\begin{eqnarray}\label{ad1}
 f(B)+f(C)\leq f(A)+f(D)- \delta_f \widetilde{X} \leq f(A)+f(D),
\end{eqnarray}
where
$\widetilde{X} = \displaystyle 1 - \frac{1}{M-m} \left( \left|B- \frac{M+m}{2} \right| + \left|C- \frac{M+m}{2} \right| \right).$

 If f is concave and any one of the following conditions
\begin{eqnarray*}
 {\rm(iii)} \ \ B+C\leq A+D \quad\mbox{and}\quad f(M)\leq f(m)\\
 {\rm(iv)} \ \ A+D\leq B+C\quad\mbox{and}\quad f(m)\leq f(M)
\end{eqnarray*}
is satisfied, then inequality \eqref{ad1} is reversed.
\end{theorem}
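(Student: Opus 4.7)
The plan is to imitate the proof of Theorem~\ref{t0} / Corollary~\ref{t1}, but to replace the equality $A+D=B+C$ by an operator inequality, which forces us to keep track of the sign of the slope in the resulting affine expression in $B+C$ (respectively $A+D$). Fix the convex case and argue (i); case (ii) is symmetric and the concave case is obtained by reversing every inequality using the concave version of Lemma~\ref{l-mpp}.

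First, since $B,C\in\sigma([m,M])$, I would apply Lemma~\ref{l-mpp} to both $B$ and $C$ and add, getting
\begin{equation*}
f(B)+f(C)\;\leq\;\frac{2M-(B+C)}{M-m}\,f(m)+\frac{(B+C)-2m}{M-m}\,f(M)-\delta_f\,\widetilde{X},
\end{equation*}
since $\widetilde{B}+\widetilde{C}=\widetilde{X}$ by direct computation. Now view the right-hand side (without the $\delta_f$ term) as the affine operator function
\begin{equation*}
G(S)\;=\;\frac{2Mf(m)-2mf(M)}{M-m}\;+\;\frac{f(M)-f(m)}{M-m}\,S,
\end{equation*}
evaluated at $S=B+C$. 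The coefficient $\frac{f(M)-f(m)}{M-m}$ is a scalar, so for any two self-adjoint operators $S_1\leq S_2$ one has $G(S_1)\leq G(S_2)$ whenever $f(m)\leq f(M)$, and the reverse when $f(M)\leq f(m)$. Hence, under hypothesis~(i), $B+C\leq A+D$ and $f(m)\leq f(M)$ together yield $G(B+C)\leq G(A+D)$; under~(ii), the inequalities on $S$ and on $f(m),f(M)$ both flip, giving the same conclusion $G(B+C)\leq G(A+D)$.

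Next I would estimate $G(A+D)$ from above by $f(A)+f(D)$. Because $A\leq m$ and $D\geq M$ lie outside $[m,M]$, the convexity of $f$ on $J$ makes the chord through $(m,f(m))$ and $(M,f(M))$ lie \emph{below} the graph of $f$, so
\begin{equation*}
f(A)\geq\frac{M-A}{M-m}f(m)+\frac{A-m}{M-m}f(M),\qquad f(D)\geq\frac{M-D}{M-m}f(m)+\frac{D-m}{M-m}f(M),
\end{equation*}
which upon adding gives precisely $G(A+D)\leq f(A)+f(D)$. Concatenating the three estimates gives the first inequality of \eqref{ad1}. The second inequality of \eqref{ad1} amounts to $\delta_f\widetilde{X}\geq 0$, which holds because $\delta_f\geq 0$ by convexity of $f$ on the interval containing $m,M$, and $\widetilde{X}\geq 0$ as a positive operator by the scalar bound $|t-\tfrac{M+m}{2}|\leq\tfrac{M-m}{2}$ for $t\in[m,M]$ applied via the functional calculus to $B$ and $C$ separately and summed, exactly as in the last paragraph of the proof of Theorem~\ref{t0}.

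The main obstacle, which is really the only novelty compared with Corollary~\ref{t1}, is cleanly explaining why replacing $B+C=A+D$ with a one-sided operator inequality $B+C\leq A+D$ (or the reverse) still permits the substitution step: this is exactly why the two sign hypotheses on $f(m)$ and $f(M)$ enter, as they guarantee the scalar multiplier $\frac{f(M)-f(m)}{M-m}$ has the correct sign to preserve an operator inequality when multiplied through. The concave case with conditions (iii) and (iv) is handled identically: Lemma~\ref{l-mpp} reverses, giving $f(B)+f(C)\geq G(B+C)-\delta_f\widetilde{X}$ with $\delta_f\leq 0$, while the outside-chord bound and the monotonicity argument for $G$ both reverse, so that the whole chain of inequalities flips, yielding the reversed form of \eqref{ad1}.
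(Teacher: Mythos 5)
Your proposal is correct and follows essentially the same route as the paper: apply Lemma~\ref{l-mpp} to $B$ and $C$, sum, use the sign of the scalar slope $\frac{f(M)-f(m)}{M-m}$ together with the hypothesis on $B+C$ versus $A+D$ to pass to $A+D$, then bound by $f(A)+f(D)$ via the chord inequality outside $[m,M]$, and finish with $\delta_f\widetilde{X}\geq 0$. The only cosmetic difference is that you package the chord in the affine function $G$, whereas the paper writes it in slope--intercept form directly; the content is identical.
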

\begin{proof}
Let $f$ be convex and ${\rm(i)}$ is valid. It follows from Lemma~\ref{l-mpp}
that
 $$f(B)\leq \frac{f(M)-f(m)}{M-m}B+\frac{f(m)M-f(M)m}{M-m} - \delta_f \left( \frac{1}{2} - \frac{1}{M-m} \left|B- \frac{M+m}{2} \right| \right)$$
 and
 $$f(C)\leq \frac{f(M)-f(m)}{M-m}C+\frac{f(m)M-f(M)m}{M-m}- \delta_f \left( \frac{1}{2} - \frac{1}{M-m} \left|C- \frac{M+m}{2} \right| \right).$$
 Summing above inequalities we get
 \begin{align*}
 f(B)+f(C)&\leq \frac{f(M)-f(m)}{M-m}(B+C)+2\ \frac{f(m)M-f(M)m}{M-m} -\delta_f \widetilde{X}\\
 &\leq \frac{f(M)-f(m)}{M-m}(A+D)+2\ \frac{f(m)M-f(M)m}{M-m}-\delta_f \widetilde{X} \quad (\mbox{by {\rm(i)}})\\
 &=\frac{f(M)-f(m)}{M-m}A+\frac{f(m)M-f(M)m}{M-m}\\
 &\ \ + \frac{f(M)-f(m)}{M-m}D+\frac{f(m)M-f(M)m}{M-m} - \delta_f \widetilde{X}\\
 & \leq f(A)+f(D)- \delta_f \widetilde{X}\qquad (\mbox{by \eqref{re6} and \eqref{re7}}) \\
 & \leq f(A)+f(D)\qquad (\mbox{by $\delta_f \widetilde{X}\geq 0$ })
 \end{align*}
 The other cases can be verified similarly.
\end{proof}
Applying the above theorem to the power functions we get
\begin{corollary}
 Let $A,B,C,D\in \mathbb{B}_{h}(\mathscr{H})$ be such that $ I\leq A\leq m\leq B,C\leq M \leq D $ for two real numbers $m< M$. If one of the following conditions \begin{eqnarray*}
 {\rm(i)} \ \ B+C\leq A+D \quad\mbox{and}\quad p\geq 1\\
 {\rm(ii)} \ \ A+D\leq B+C\quad\mbox{and}\quad p\leq0
\end{eqnarray*}
is satisfied, then
 \begin{eqnarray*}
B^p+C^p\leq A^q+D^q - \delta_p \widetilde{X} \leq A^q+D^q
\end{eqnarray*}
 for each $q\geq p$, where $$\displaystyle \delta_p = m^p+M^p - 2 \left( \frac{m+M}{2}\right)^p, \, \,
\displaystyle \widetilde{X} = 1 - \frac{1}{M-m} \left( \left|B- \frac{M+m}{2} \right| + \left|C- \frac{M+m}{2} \right| \right).$$
\end{corollary}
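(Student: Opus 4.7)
The plan is to specialize Theorem~\ref{ad} to the power function $f(t)=t^p$ and then upgrade the exponent on the right-hand side from $p$ to $q$ using the fact that $A$ and $D$ dominate $I$.

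First I would verify the hypotheses of Theorem~\ref{ad} for $f(t)=t^p$. Because $I\le A\le m\le B,C\le M\le D$, all four operators have spectra in $[1,\infty)$, so in particular $1\le m<M$ and the power function is defined and smooth on a neighbourhood $J\supseteq[m,M]$ of $[m,M]$ in $(0,\infty)$. Computing $f''(t)=p(p-1)t^{p-2}$, the function $t\mapsto t^p$ is convex on $(0,\infty)$ precisely when $p\ge1$ or $p\le0$. Under case~(i), $p\ge1$ gives convexity, and the monotonicity $f(m)\le f(M)$ reduces to $m^p\le M^p$, which is immediate from $m<M$ and $p>0$; combined with the assumed $B+C\le A+D$ this matches hypothesis (i) of Theorem~\ref{ad}. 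Under case~(ii), $p\le0$ gives convexity, and for $0<m<M$ one has $m^p\ge M^p$, i.e. $f(M)\le f(m)$; together with $A+D\le B+C$ this matches hypothesis (ii) of Theorem~\ref{ad}.

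In either case, Theorem~\ref{ad} yields
\begin{equation*}
B^p+C^p\;\le\;A^p+D^p-\delta_p\,\widetilde{X}\;\le\;A^p+D^p,
\end{equation*}
with $\delta_p\,\widetilde{X}\ge0$ coming from convexity of $t^p$ and the elementary bound $|t-(m+M)/2|\le(M-m)/2$.

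The remaining step is to replace $A^p+D^p$ by $A^q+D^q$ for $q\ge p$. Here is where the assumption $I\le A$ is crucial: since $\sigma(A)\subseteq[1,m]\subseteq[1,\infty)$ and the scalar inequality $t^q\ge t^p$ holds for every $t\ge1$ and $q\ge p$ (because $t^{q-p}\ge1$), the functional calculus gives $A^q\ge A^p$. The same argument applied to $D$, whose spectrum lies in $[M,\infty)\subseteq[1,\infty)$, gives $D^q\ge D^p$. Adding these yields $A^p+D^p\le A^q+D^q$, so
\begin{equation*}
A^p+D^p-\delta_p\,\widetilde{X}\;\le\;A^q+D^q-\delta_p\,\widetilde{X}\;\le\;A^q+D^q,
\end{equation*}
and chaining this with the previous inequality produces the desired conclusion.

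I do not expect any serious obstacle: the whole argument is a matter of checking sign conventions and using operator monotonicity of $t\mapsto t^{q-p}$ on $[1,\infty)$. The only mildly subtle point is keeping track of which of the two monotonicity hypotheses $f(m)\le f(M)$ or $f(M)\le f(m)$ is triggered by the sign of $p$, which is why cases (i) and (ii) of the corollary are paired with $p\ge1$ and $p\le0$ respectively.
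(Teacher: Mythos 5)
Your proposal is correct and follows essentially the same route as the paper: apply Theorem~\ref{ad} with $f(t)=t^p$ (convex for $p\ge1$ or $p\le0$, with the monotonicity condition $f(m)\le f(M)$ or $f(M)\le f(m)$ matching cases (i) and (ii)), and then pass from $A^p+D^p$ to $A^q+D^q$ via $q\ge p$ and $\delta_p\widetilde{X}\ge0$. The only difference is that you spell out explicitly why $A^p\le A^q$ and $D^p\le D^q$ (namely $I\le A$, $I\le D$ and $t^p\le t^q$ on $[1,\infty)$), a step the paper justifies only with the remark ``by $q\ge p$''.
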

\begin{proof}
Let ${\rm(i)}$ be valid. Applying Theorem~\ref{ad} for $f(t)=t^p$, it follows
 \begin{align*}
 B^p+C^p&\leq A^p+D^p- \delta_p \widetilde{X}\\
 &\leq A^q+D^q- \delta_p \widetilde{X} \qquad (\mbox{by $q\geq p$})\\
 &\leq A^q+D^q \qquad (\mbox{by $\delta_p \widetilde{X}\geq 0$ })
 \end{align*}
 The other cases may be verified similarly.
\end{proof}

\textbf{Acknowledgement.} The third author would like to thank the
Tusi Mathematical Research Group (TMRG), Mashhad, Iran.

\bibliographystyle{amsplain}

\begin{thebibliography}{99}
\bibitem{AZ} T. Ando and X. Zhan, \textit{Norm inequalities related to operator monotone functions}, Math. Ann. \textbf{315} (1999), 771--780.

\bibitem{AA} K. M.R. Audenaert and J.S. Aujla \textit{On norm sub-additivity and super-additivity inequalities for concave and convex functions }, arXiv:1012.2254v2.

\bibitem{Fu} T. Furuta, J. Mi\'ci\'c Hot, J. Pe\v{c}ari\'{c} and Y. Seo, \textit{Mond--Pe\v{c}ari\'{c} Method in Operator Inequalities}, Zagreb, Element, 2005.

 \bibitem{km} M. Kian and M.S. Moslehian, \textit{Operator inequalities related to $Q$-class functions}, Math. Slovaca, (to appear).

\bibitem{mpp} A. Matkovi\'c, J. Pe\v{c}ari\'{c} and I. Peri\'c,
\textit{A variant of Jensen's inequality of Mercer's type for operators with applications}, Linear Algebra Appl. \textbf{418} (2006), 551--564.

\bibitem{MPP1} J. Mi\'{c}i\'{c}, Z. Pavi\'{c} and J. Pe\v{c}ari\'{c},
\textit{Jensen's inequality for operators without operator
convexity}, Linear Algebra Appl. \textbf{434} (2011), 1228--1237.

\bibitem{MPPeric1} J. Mi\'{c}i\'{c}, J. Pe\v{c}ari\'{c} and J. Peri\'{c},
\textit{Refined Jensen's operator inequality with condition on spectra}, Oper. Matrices \textbf{7} (2013), 293--308.

\bibitem{msm} M.S. Moslehian, \textit{Operator extensions of Hua's inequality}, Linear Algebra Appl. \textbf{430} (2009), no. 4, 1131–-1139.

\bibitem{mmk} M.S. Moslehian, J. Mi\'ci\'c and M. Kian, \textit{An operator inequality and its consequences}, Linear Algebra Appl. (2012), .

\bibitem{mn} M.S. Moslehian and H. Najafi. \textit{Around operator monotone functions}, Integral Equations Operator Theory \textbf{71} (2011), 575--582.
 \bibitem{U} M. Uchiyama, \textit{Subadditivity of eigenvalue sums}, Proc. Amer. Math. Soc. \textbf{134} (2006), 1405--1412.

\end{thebibliography}

\end{document}